\definecolor{darkgreen}{rgb}{0,0.6,0}
\definecolor{darkred}{rgb}{0.7,0,0}
\definecolor{darkblue}{rgb}{0,.2,.7}
\theoremstyle{plain}
\newtheorem*{rep@theorem}{\rep@title}
\newcommand{\newreptheorem}[2]{%
\newenvironment{rep#1}[1]{%
 \def\rep@title{#2 \ref{##1}}%
 \begin{rep@theorem}}%
 {\end{rep@theorem}}}
\newtheorem{thm}{Theorem}[section] 
\newtheorem{cor}[thm]{Corollary}
\newtheorem{corollary}[thm]{Corollary} 
\newtheorem{lemma}[thm]{Lemma}
\theoremstyle{definition}
\newtheorem{example}[thm]{Example}
\renewcommand{\epsilon}{\varepsilon}
\let\theta\vartheta
\let\phi\varphi
\DeclareMathAlphabet{\doba}{U}{msb}{m}{n}
\gdef\i{\mathrm{i}}
\gdef\d{\mathrm{d}}
\gdef\mC{\doba{C}}
 \def\scal{{\mathop{\rm scal}}}
 \def\Ric{{\mathop{\rm Ric}}}
\def\C{\mathbb C}
\def\Spin{{\mathop{\rm Spin}}}
\def\spin{{\mathop{\rm Spin}}}
\newcommand{\Spinc}{\mathrm{Spin}^c}
\newcommand{\trace}{\mathrm{tr\,}}
\newcommand{\define}{\mathrel{:=}}
\newcommand{\ccdot}{\!\cdot\!}
\begin{document}
\title[Totally Umbilic Hypersurfaces of  \texorpdfstring{$\Spinc$}{Spinc} manifolds with special  spinors]{Totally Umbilical Hypersurfaces of  \texorpdfstring{$\Spinc$}{Spinc} manifolds carrying special  spinor fields}

\author{Nadine Grosse}
 \address{Mathematical Institute \\ 
 Universit\"at Freiburg \\
 79104 Freiburg \\
 Germany.}
\email{nadine.grosse@math.uni-freiburg.de}

\author{Roger Nakad}
\address{Department of Mathematics and Statistics\\ 
Faculty of Natural and Applied Sciences \\
Notre Dame University-Louaize\\
 P.O. Box 72, Zouk Mikael, Zouk Mosbeh\\
Lebanon.}
\email{rnakad@ndu.edu.lb}

\subjclass[2010]{53C27, 53C25, 53C42}

\keywords{Totally umbilical  hypersurfaces, constant mean curvature, $\Spinc$ manifolds with special spinor fields, differential forms, extrinsic hypersphere}

\thanks{Acknowledgement: This work was initiated during the stay of both authors  at the "Centre International de Rencontres Math\'ematiques" in Luminy, Marseille-France (Research in Pairs program). Then, it was continued during the stay of both authors at CIRM (Centro Internazionale per la Ricerca Matematica) in Italy (Research in Pairs program). The authors gratefully acknowledge the support of both centers. The second author thanks the Institute of Mathematics of the University of Freiburg for its continuous support and hospitality during many research stays.
The first author was supported by the Juniorprofessurenprogramm Baden-Württemberg}

 \begin{abstract} 
Under some dimension restrictions, we prove that totally umbilical  hypersurfaces of $\Spinc$ manifolds carrying a parallel, real or imaginary Killing spinor are of constant mean curvature. This extends to the $\Spinc$ case the result of O. Kowalski stating that, every totally umbilical  hypersurface of an Einstein manifold of dimension greater or equal to $3$ is of constant mean curvature. As an application, we
prove that there are no extrinsic hypersheres in complete Riemannian  $\Spin$ manifolds of
non-constant sectional curvature carrying a parallel, Killing or imaginary Killing spinor.
\end{abstract}

 \maketitle

\section{Introduction}

Using  classical submanifold techniques, a lot of results on the geometry  
of totally umbilical   submanifolds (and other special hypersurfaces) in ambient manifolds of special geometries were obtained \cite{BCO, Chen1, chen1, chen2, chen3, chen4, ok, to, tsu, yam}. As one example, O. Kowalski \cite{Kow} used the Codazzi-Mainardi equation to prove  the following  elementary and well-known result:
\begin{thm} \label{bfact}Every totally umbilical connected hypersurface of an Einstein manifold of dimension greater or equal to 3 is of constant mean curvature.
\end{thm}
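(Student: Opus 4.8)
The strategy, following Kowalski, is to combine the totally umbilical hypothesis with the Codazzi--Mainardi equation and then to take a trace. Write $(M^n,g)$ for the Einstein ambient manifold, with $\Ric^M = c\,g$ for a constant $c$, and let $\Sigma^{n-1}\subset M$ be a connected totally umbilical hypersurface carrying a (locally defined) unit normal $\nu$. Denote by $A$ the associated shape operator; being totally umbilical means $A=\lambda\,\Id$ for some smooth function $\lambda$ on $\Sigma$ (smoothness is automatic since $\lambda=\tfrac1{n-1}\trace A$), and the mean curvature is a fixed multiple of $\lambda$. Hence it suffices to show that $\lambda$ is constant on $\Sigma$.

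First I would recall the Codazzi--Mainardi equation for a hypersurface, which (up to an overall sign fixed by the conventions on $\nu$ and $A$) reads
\[
(\nabla_X A)Y-(\nabla_Y A)X=\bigl(R^M(X,Y)\nu\bigr)^{\top}
\]
for all $X,Y$ tangent to $\Sigma$, where $R^M$ is the ambient curvature tensor and $(\cdot)^{\top}$ denotes orthogonal projection onto $T\Sigma$. Inserting $A=\lambda\,\Id$ and using $(\nabla_X A)Y=\nabla_X(\lambda Y)-\lambda\nabla_X Y=X(\lambda)\,Y$, the left-hand side becomes $X(\lambda)\,Y-Y(\lambda)\,X$.

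Next I would contract this identity over an orthonormal frame $e_1,\dots,e_{n-1}$ of $T\Sigma$: setting $Y=e_i$, pairing with $e_i$, and summing. On the left, using $X=\sum_i g(X,e_i)e_i$,
\[
\sum_{i=1}^{n-1}\bigl(X(\lambda)\,g(e_i,e_i)-e_i(\lambda)\,g(X,e_i)\bigr)=(n-1)\,X(\lambda)-X(\lambda)=(n-2)\,X(\lambda).
\]
On the right, $\sum_i g\bigl(R^M(X,e_i)\nu,e_i\bigr)=\sum_i R^M(X,e_i,\nu,e_i)$; adjoining $\nu$ to complete $e_1,\dots,e_{n-1}$ to an orthonormal basis of $T_pM$ and using the symmetries of $R^M$ (the extra term $R^M(X,\nu,\nu,\nu)$ vanishing by antisymmetry), this sum equals $-\Ric^M(X,\nu)=-c\,g(X,\nu)=0$, because $X\in T\Sigma$. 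Therefore $(n-2)\,X(\lambda)=0$ for every $X\in T\Sigma$.

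Finally, the hypothesis $n\geq 3$ makes $n-2\neq 0$, so $X(\lambda)=0$ for all tangent $X$, i.e.\ $\lambda$ is locally constant and hence constant on the connected hypersurface $\Sigma$; thus the mean curvature is constant. There is no serious obstacle in this argument: the only points requiring care are getting the index combinatorics of the contraction right (so that one obtains the factor $n-2$ rather than, say, $n-1$) and recognizing that $n\geq3$ is exactly what is needed for this factor to be invertible --- indeed for $n=2$ the statement fails, every regular curve in a surface being vacuously ``totally umbilical'' while having non-constant geodesic curvature in general.
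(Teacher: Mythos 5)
Your proof is correct and follows essentially the same route as the paper: the Codazzi--Mainardi equation for a totally umbilical hypersurface, traced over a tangent frame, identifies $(n-2)\,dH$ (in the paper's notation, $(m-1)\,dH$) with the mixed Ricci term $\widetilde{\Ric}(X,\nu)$, which vanishes by the Einstein condition, so $dH=0$ once the ambient dimension is at least $3$. This is precisely the computation the paper attributes to Kowalski and reuses in its Section 3.
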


Examples of ambient Riemannian Einstein manifolds $(\widetilde M^{m+1}, \widetilde g)$ of dimension 
 $m+1 \geq 3$ are Riemannian $\Spin$ manifolds carrying an $\alpha$-Killing spinor ($\alpha\in \mathbb C$), i.e., a spinor field $\psi$ 
satisfying the equation 
\begin{equation}\label{def:Killing} \widetilde \nabla_X \psi = \alpha X\cdot \psi,\end{equation}
for any  vector $X$ tangent to $\widetilde M$, where $\widetilde \nabla$ denotes the spinorial Levi-Civita connection on the spinor bundle and $``\cdot"$ the Clifford multiplication, compare Section~\ref{prelim}.\medskip

For $\Spin$ manifolds it is known that the Killing constant $\alpha$ has to be zero (parallel spinor), a nonzero real constant (real Killing spinor) or a nonzero purely imaginary constant (imaginary Killing spinor) \cite{BHMM}. When $\alpha$ is real, such spinors characterize the limiting case in the Friedrich's and Hijazi's inequalities which provide a lower bound for the eigenvalues of the Dirac operator involving the infimum of the scalar curvature or the first eigenvalue of the Yamabe operator \cite{frfr, hij84, hij86}. Moreover, the existence of $\alpha$-Killing spinors leads to restrictions on the geometry and topology of the manifold. In fact besides being Einstein (and even Ricci-flat when $\alpha =0$), $\widetilde M$ is automatically compact if $\alpha$ is real and noncompact if $\alpha$ is purely imaginary.  Complete simply connected $\Spin$ manifolds with real, parallel or imaginary Killing  spinors  have been classified by Wang \cite{wang}, B{\"a}r \cite{bar1} and Baum  \cite{baum1, baum2, baum3} and the existence is glued to the holonomy of the manifold. This classification gives, in some dimensions, other examples than the most symmetric ones as Euclidean space, the sphere  or the hyperbolic space. These examples are relevant to physicists in general relativity where the Dirac operator plays a central role. \medskip
 
Techniques from $\Spin$ geometry  have been successfully used to produce striking advances in extrinsic geometry (see e.g. the study of CMC or minimal surfaces in homogeneous 3-spaces which arise in Thurston's classification of 3-dimensional geometries and Alexandrov-type theorems as in \cite{HM13, HS14, HMZ01, HMR03, Bar98}). It is remarkable that, in many extrinsic results, $\Spin$ geometrical tools - in particular special/natural spinor fields and  the Dirac operator - have played a central role and  inspired further research directions.\medskip

When shifting from the from the classical $\spin$ geometry to $\Spinc$ geometry, the situation is more general and many obstacles appear since the $\Spinc$ structure will not only depend on the geometry of the manifold but also on the connection (and hence the curvature) of the auxiliary line bundle associated with the fixed $\Spinc$ structure. From a physical point of view, spinors model fermions while $\Spinc$-spinors can be interpreted as fermions coupled to an electromagnetic field. Transferring the idea to use spinorial methods in the study of submanifolds to the $\Spinc$ world, allows us to cover more ambient geometric structures (CR structures, K\"ahler and Sasaki structures). Indeed, O. Hijazi, S. Montiel and F. Urbano constructed on K\"ahler-Einstein  manifolds  with  positive  scalar  curvature \cite{HMU}, $\Spinc$
structures  carrying K\"ahlerian  Killing  spinors.  The  restriction  of  these  spinors  to  minimal  Lagrangian
submanifolds provides topological and geometric restrictions on these submanifolds (see \cite{RN12, Na11} for other applications of the use of $\Spinc$ geometry in  extrinsic geometry). Equation \eqref{def:Killing} on $\Spinc$ manifolds has been studied  by A. Moroianu \cite{moroi}  when $\alpha$ is real and by the authors \cite{grosse-nakad} when $\alpha$ is purely imaginary. In fact, a complete simply connected manifold has a parallel $\Spinc$ spinor if and only if it is isometric to the Riemannian product between a simply connected K\"ahler manifold (with its canonical or anti-canonical $\Spinc$ structure) and a simply connected $\Spin$ manifold carrying a parallel spinor. The only simply connected $\Spinc$ manifolds admitting real non-parallel Killing spinors other than the $\Spin$ manifolds are the non-Einstein Sasakian manifolds
endowed with their canonical or anti-canonical $\Spinc$ structure. Beside that, complete $\Spinc$ manifolds
with imaginary Killing spinors are isometric to a special warped product of a $\Spinc$ manifold with a parallel spinor with  $\mathbb R$. These classification results, stated above, show that, in contrast to the $\Spin$ case,  $\Spinc$ manifolds carrying an $\alpha$-Killing spinor are not in general Einstein manifolds. For this reason, we start by extending  Theorem~\ref{bfact} to ambient Riemannian $\Spinc$ manifolds carrying an $\alpha$-Killing spinor. In fact, our main result is:
\begin{thm}\label{Exto}
Every connected totally umbilical  hypersurface $M^m$  of a Riemannian $\Spinc$ manifold $\widetilde M^{m+1}$ with $m+1\geq 5$  carrying a real (including parallel) Killing spinor or of dimension $m+1\geq 3$ carrying an imaginary Killing spinor is of constant mean curvature. \end{thm}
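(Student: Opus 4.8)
The plan is to run Kowalski's Codazzi argument (Theorem~\ref{bfact}), with the Einstein condition replaced by the weaker curvature information carried by the $\alpha$-Killing spinor $\psi$, and then to control the obstruction to being Einstein --- the curvature $\i\Omega$ of the auxiliary line bundle --- through the natural parallel-type forms attached to $\psi$, at the price of a dimension restriction. First fix a local unit normal $\nu$ on $M$ and let $A$ be the shape operator; total umbilicity means $A=\lambda\,\Id_{TM}$ for a function $\lambda$, with $m\lambda$ (up to sign) the mean curvature. The Codazzi--Mainardi equation gives, for $X,Y,Z\in TM$,
\[
\widetilde g\bigl(\widetilde R(X,Y)Z,\nu\bigr)=(X\lambda)\,g(Y,Z)-(Y\lambda)\,g(X,Z),
\]
and tracing over an orthonormal frame $e_1,\dots,e_m$ of $TM$ (with $X=Z=e_i$), the contribution of $\widetilde R(\nu,Y)\nu$ drops by skew-symmetry, leaving $(m-1)\,\d\lambda(Y)=-\,\widetilde{\Ric}(Y,\nu)$ for all $Y\in TM$. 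Since $m\ge2$ and $M$ is connected, the theorem reduces to showing that the mixed Ricci curvature $\widetilde{\Ric}(Y,\nu)$ vanishes for every $Y$ tangent to $M$; equivalently, $\d\lambda\equiv0$.

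Next I would differentiate $\widetilde\nabla_X\psi=\alpha\,X\ccdot\psi$ once more and skew-symmetrise, getting $\widetilde R^\Sigma(X,Y)\psi=-2\alpha^2\,(X\wedge Y)\ccdot\psi$; substituting the $\Spinc$ curvature identity $\widetilde R^\Sigma(X,Y)\psi=\tfrac14\sum_{i,j}\widetilde g(\widetilde R(X,Y)e_i,e_j)\,e_i\ccdot e_j\ccdot\psi+\tfrac\i2\Omega(X,Y)\,\psi$ and contracting with Clifford multiplication gives
\[
\widetilde{\Ric}(X)\ccdot\psi=4m\alpha^2\,X\ccdot\psi+\i\,(\iota_X\Omega)\ccdot\psi\qquad(X\in T\widetilde M),
\]
so the failure of $\widetilde M$ to be Einstein is measured exactly by the term $\i\,(\iota_X\Omega)\ccdot\psi$. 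If $\widetilde M$ is Einstein --- in particular if it is a genuine $\Spin$ manifold with $\alpha$ real or zero --- Theorem~\ref{bfact} applies and there is nothing to prove. Otherwise, by the classification of parallel and Killing $\Spinc$ spinors \cite{moroi, grosse-nakad}, $\widetilde M$ carries a non-trivial ``canonical $2$-form'' $\omega_\psi(X,Y)\define-\i\langle(X\wedge Y)\ccdot\psi,\psi\rangle$ (morally a K\"ahler or fundamental form; it is real because Clifford multiplication by $2$-forms is skew-Hermitian). Writing $\Phi$ for the skew-symmetric endomorphism with $g(\Phi X,Y)=\omega_\psi(X,Y)$, the Killing equation forces $\widetilde\nabla\omega_\psi$, hence $\d\omega_\psi$, to be an explicit expression linear in $\alpha$ (zero when $\alpha=0$), and lets one rewrite both $\Omega$ and $\widetilde{\Ric}$ in terms of $\omega_\psi$.

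The heart of the argument --- and the step where the dimension hypotheses really enter --- runs as follows. Put $\xi\define\Phi\nu$, which is tangent to $M$ since $g(\Phi\nu,\nu)=\omega_\psi(\nu,\nu)=0$. Using $\widetilde\nabla_X\nu=-\lambda X$ (umbilicity) together with the formula for $\widetilde\nabla_X\Phi$ (parallel if $\alpha=0$, otherwise corrected by an explicit $\alpha$-dependent $3$-form built from $\psi$), one computes the tangential part of $\widetilde\nabla_X\xi$ and deduces a relation of the form
\[
\d\bigl(\xi^{\flat}|_M\bigr)=-2\lambda\,\omega_\psi|_M+(\text{terms linear in }\alpha).
\]
Applying $\d$ once more, and using $\d(\omega_\psi|_M)=(\d\omega_\psi)|_M=(\text{terms linear in }\alpha)$, yields
\[
\d\lambda\wedge\omega_\psi|_M=(\text{terms linear in }\alpha).
\]
For $\alpha=0$ this reads $\d\lambda\wedge\omega_\psi|_M=0$, and a short argument in exterior algebra shows that a $1$-form killed by wedging with a $2$-form of rank $\ge4$ must vanish; the rank of $\omega_\psi|_M$ is $\ge4$ for every parallel $\Spinc$ model with $m+1\ge5$ apart from a short list of exceptions (roughly, those with a K\"ahler factor of complex dimension $1$ or a $3$-dimensional Sasakian factor sitting inside), so in those cases we get $\d\lambda=0$ at once. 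The leftover low-rank cases, and the cases $\alpha\in\R\setminus\{0\}$ (non-Einstein Sasakian manifolds, handled by the same mechanism with the low-rank situations done by hand) and $\alpha\in\i\R\setminus\{0\}$ (warped products, where the $\alpha$-terms combine with the vanishing mixed Ricci curvature of a warped product to still force $\d\lambda=0$ already for $m+1\ge3$), are settled by invoking the explicit structure of the classification models; this bookkeeping is exactly what fixes the thresholds at $m+1\ge5$ for real or parallel Killing spinors and $m+1\ge3$ for imaginary ones.

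Finally, $\d\lambda\equiv0$ on the connected manifold $M$ makes $\lambda$, and hence the mean curvature $m\lambda$, constant --- the assertion. The genuinely hard part will be the third step: the first two are the classical Codazzi computation and a routine Weitzenb\"ock-type contraction, whereas extracting the relation $\d(\xi^\flat|_M)=-2\lambda\,\omega_\psi|_M+\cdots$ on a totally umbilical hypersurface, handling the $\alpha$-dependent corrections, and identifying precisely the dimensions in which the rank argument closes is where the real work lies.
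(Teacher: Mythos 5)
Your first two steps coincide with the paper's: tracing the Codazzi--Mainardi equation on a totally umbilical hypersurface gives $\widetilde\Ric(Y,\nu)=(m-1)\,dH(Y)$ for $Y\in TM$, and the Ricci identity applied to the Killing spinor turns this into a pointwise spinorial relation in which the only obstruction to $dH=0$ is the curvature term of the auxiliary line bundle. The gap is in your third step, which is exactly where the dimension hypotheses have to be produced. Your only quantitative mechanism there is the single $2$-form $\omega_\psi$ together with the elementary fact that $dH\wedge\omega_\psi|_M=0$ forces $dH=0$ when $\omega_\psi|_M$ has rank at least $4$. But the low-rank configurations you set aside as ``bookkeeping'' are not marginal: they contain honest ambient manifolds of every dimension $\geq 5$ covered by the theorem, e.g.\ $\widetilde M=\mathbb S^2\times X^{m-1}$ with $X$ a Ricci-flat spin manifold with parallel spinor (there $\omega_\psi$ is essentially the area form of $\mathbb S^2$, of rank $2$ for every $m$), and $5$-dimensional Sasakian ambients, where the restriction of the fundamental $2$-form to $M^4$ can drop to rank $2$. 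These are precisely the configurations that degenerate to the genuine counterexamples $\mathbb S^2\times\mathbb R$ and $\mathbb S^2\times\mathbb H^2$ in dimensions $3$ and $4$, so whatever closes them must really use $m+1\geq5$; you give no such argument. The paper's proof lives exactly there: it uses the whole family of even-degree forms $\omega_{2k}$, all satisfying $dH\wedge\omega_{2k}=0$, to make a dimension count $2^{[\frac m2]}\geq 2^{m-2}$, hence a contradiction for $m>4$, and then a separate, delicate argument for the borderline case $m=4$ (the conjugate spinor, the splitting $\Sigma M=\Sigma^+M\oplus\Sigma^-M$, constancy of $|\phi|$); nothing in your outline corresponds to either part.

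Two further points. First, you lean on the classification of $\Spinc$ manifolds with parallel, real or imaginary Killing spinors both to produce a non-trivial $\omega_\psi$ and to ``settle the leftover cases by the explicit structure of the models'', but those classifications are global statements about complete, simply connected manifolds, whereas Theorem~\ref{Exto} assumes neither completeness nor simple connectivity; the paper's proof is entirely local and never invokes them. Second, your mechanism for the imaginary case does not work as stated: even granting the warped-product structure $\widetilde M=N\times_f\mathbb R$, the vanishing of the mixed Ricci curvature of a warped product concerns only pairs of the form (tangent to $N$, warping direction), while the normal $\nu$ of an arbitrary hypersurface need not be the warping direction, and in the $\Spinc$ setting $N$ (hence $\widetilde M$) need not be Einstein, so $\widetilde\Ric(Y,\nu)$ has no reason to vanish on this ground. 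The paper's Case~2 instead argues locally with the odd forms $\eta_p$ (yielding $\<e_i\cdot\phi,\phi\>=0$ in directions orthogonal to $\mathrm{grad}\, H$), the Dirac current, and a three-fold case distinction on $\<dH\cdot\phi,\phi\>$ and $\<\nu\cdot\phi,\phi\>$, and this has no counterpart in your proposal. In short: the Codazzi reduction and the curvature identity are correct, but the steps that actually yield the thresholds $m+1\geq5$ and $m+1\geq3$ are missing or flawed.
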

Here, we recall that if $\widetilde{M}$ is already $\Spin$, Theorem~\ref{Exto} is just a special case of Theorem~\ref{bfact} because in this case $\widetilde M$ is Einstein. Using the classification of Killing spinors on $\Spinc$ manifolds cited above we thus in particular obtained:

\begin{corollary}
 Every connected totally umbilical  hypersurface $M^m$  of a Kähler or a Sasakian manifold has constant mean curvature.
\end{corollary}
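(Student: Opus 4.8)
The plan is to reduce everything to Theorem~\ref{Exto}: it suffices to check that a Kähler manifold carries a parallel $\Spinc$ spinor and that a Sasakian manifold carries a real Killing $\Spinc$ spinor, each with respect to a canonically attached $\Spinc$ structure, and then to invoke the theorem. So the substantive content is entirely hidden in Theorem~\ref{Exto}, which we are allowed to assume; what remains is a dictionary step plus attention to the low-dimensional range.

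For a Kähler manifold $(\widetilde M^{2n},g,J)$ I would equip $\widetilde M$ with its \emph{canonical} (equivalently, anti-canonical) $\Spinc$ structure: the spinor bundle is $\Lambda^{0,\bullet}\widetilde M=\bigoplus_{q=0}^{n}\Lambda^{0,q}\widetilde M$, the auxiliary line bundle is the (anti-)canonical bundle carrying the connection induced by the Levi-Civita connection, and under this identification the constant spinor $1\in\Lambda^{0,0}\widetilde M$ is parallel. This is a classical computation, and it is exactly the ``Kähler building block'' in the classification of $\Spinc$ manifolds with parallel spinors recalled in the introduction (see \cite{moroi, HMU, grosse-nakad}); note that no simple connectivity is needed for the existence of this canonical parallel spinor. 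Hence, once $2n\ge 5$ — that is $2n\ge 6$, the real dimension being even — Theorem~\ref{Exto} applies directly and gives that every connected totally umbilical hypersurface of $\widetilde M$ has constant mean curvature.

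For a Sasakian manifold $\widetilde M^{2n+1}$ I would instead use its canonical $\Spinc$ structure, built from the transverse Kähler geometry of the Reeb flow; by Moroianu's description of real Killing $\Spinc$ spinors \cite{moroi} (see also the classification quoted in the introduction \cite{grosse-nakad}) this structure carries a real Killing spinor, with Killing constant $\pm\tfrac12$. Thus, whenever $2n+1\ge 5$, Theorem~\ref{Exto} again yields the claim. If the Sasakian manifold happens to be Einstein (Sasaki--Einstein), its Riemannian metric is Einstein and the conclusion already follows from Kowalski's Theorem~\ref{bfact} with no restriction beyond $\dim\ge 3$.

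The point requiring care — and the only place where anything could go wrong — is the regime lying outside the hypotheses of Theorem~\ref{Exto}, namely Kähler ambients of real dimension $4$ and $3$-dimensional Sasakian ambients; there the corollary is to be read in the range where Theorem~\ref{Exto} actually applies (so the robust statement is for Kähler manifolds of real dimension $\ge 6$ and Sasakian manifolds of dimension $\ge 5$), or these two cases have to be handled by hand: for a non-Einstein $3$-dimensional Sasakian ambient one would contract the Codazzi-Mainardi equation of the totally umbilical surface and substitute the explicit Ricci tensor forced by the Killing $\Spinc$ spinor, whose non-Einstein part is controlled by the curvature $2$-form of the auxiliary line bundle. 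That last computation is precisely the mechanism underlying the proof of Theorem~\ref{Exto}, so this is where all the genuine work sits; the corollary itself is then merely the translation of that theorem via the existence results for the canonical spinors on Kähler and Sasakian manifolds.
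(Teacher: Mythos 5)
Your proposal is correct and is essentially the paper's own (implicit) proof: the corollary is obtained by equipping the Kähler, respectively Sasakian, ambient manifold with its canonical (or anti-canonical) $\Spinc$ structure, which carries a parallel, respectively real Killing, spinor, and then invoking Theorem~\ref{Exto}. Your caution about the low-dimensional range is justified rather than superfluous: Theorem~\ref{Exto} needs ambient dimension $\geq 5$ in the real/parallel case, and the paper's own $4$-dimensional example $\mathbb{S}^2\times\mathbb{H}^2$ is Kähler with totally umbilical hypersurfaces of non-constant mean curvature, so the corollary must indeed be read within the dimension range of Theorem~\ref{Exto} (Kähler of real dimension $\geq 6$, Sasakian of dimension $\geq 5$), exactly as you state.
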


For Kähler manifolds the last statement is known from \cite[Thm. 4.2]{Chen1} and we merely give a spinorial proof here. The counterpart for Sasakian manifolds was not known before to our best knowledge.\medskip

We will show by counterexamples that  Theorem~\ref{Exto} is sharp in the sense that it fails if the ambient $\Spinc$ manifold is of dimension  $3$ or $4$  carrying a parallel or  real Killing spinor.  The proof of Theorem~\ref{Exto} relies on two  families of differential forms naturally associated to the spinor  obtained by the restriction of the $\alpha$-Killing spinor to the hypersurface $M$. These differential forms and their exterior derivatives  involve the mean curvature $H$ of the isometric immersion and hence allow to deduce that $H$ is constant. Dependent on whether $\alpha$ is real or imaginary, the proof of Theorem~\ref{Exto} differs in these cases and is carried out separately (see Section~\ref{Mmain}). \medskip
 
As further applications of Theorem~\ref{Exto}, we give some no-existence results of extrinsic hyperspheres in some special complete  $\Spin$ manifolds.
\begin{thm}\label{THMp1}
There are no extrinsic hyperspheres in 
\begin{enumerate}[(i)]
\item complete manifolds with holonomy $G_2$ and Spin$(7)$.
 \item complete simply connected 3-Sasakian manifold of dimension $4m+3$  which is not of constant curvature
\item complete simply connected Sasakian Einstein manifold of dimension $4m+3$, $m \geq 2$ which is not 3-Sasakian 
\item compact Sasakian-Einstein manifold of dimension $2m+1$, $m \geq 2$ which are not  locally symmetric.
\item homogeneous warped product  $\widetilde M = N \times_f  \mathbb R$ where $f(t) = e^{4 \mu t}$ ($\mu \in \mathbb R^*$) and $N$ a complete Riemannian $\spin$ manifold with a parallel spinor and of non-constant sectional curvature.
\end{enumerate}
\end{thm}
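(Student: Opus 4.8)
The plan is to combine Theorem~\ref{Exto} with the classification results for Killing spinors on $\Spinc$ (in particular $\Spin$) manifolds recalled in the introduction, together with the basic fact that an \emph{extrinsic hypersphere} is a compact, connected, totally umbilical hypersurface with \emph{constant} mean curvature; in fact one uses that a complete totally umbilical hypersurface of constant mean curvature in a complete manifold forces strong rigidity. First I would recall that each of the ambient manifolds in (i)--(v) carries a special spinor of the type covered by Theorem~\ref{Exto}: manifolds with holonomy $G_2$ or $\Spin(7)$ carry a parallel spinor (hence are $\Spin$ and Ricci-flat); $3$-Sasakian and Sasakian--Einstein manifolds of the stated dimensions carry real Killing spinors by B\"ar's classification \cite{bar1}; and the warped product $N\times_f\mathbb R$ with $f(t)=e^{4\mu t}$ carries an imaginary Killing spinor by Baum's classification \cite{baum1, baum2, baum3}. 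In each case $\widetilde M$ is $\Spin$, so by Theorem~\ref{bfact} (or directly Theorem~\ref{Exto}) any connected totally umbilical hypersurface --- in particular any extrinsic hypersphere --- has constant mean curvature $H$.

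Next I would argue that the existence of an extrinsic hypersphere $M$ in such a $\widetilde M$ forces $\widetilde M$ to have constant sectional curvature, contradicting the non-constant-curvature hypothesis built into (ii)--(v) (and, for (i), contradicting Ricci-flatness since $G_2$ and $\Spin(7)$ holonomy manifolds are not flat, while a flat manifold is the only Ricci-flat space form). The mechanism is the classical one: along a totally umbilical hypersurface with $H$ constant, the Gauss equation expresses the intrinsic curvature of $M$ in terms of the ambient curvature plus a constant-curvature term $H^2(\,\cdot\,)$, and the Codazzi equation (which already gave $\mathrm{d}H=0$ via Theorem~\ref{bfact}'s proof) forces the normal component of the ambient curvature tensor to vanish on $M$. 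Combined with the Einstein condition that accompanies a Killing spinor on a $\Spin$ manifold, and using that an extrinsic hypersphere bounds (it is the boundary of a geodesic ball, or more precisely: the presence of one umbilic compact hypersphere propagates along normal geodesics), one deduces that $\widetilde M$ has constant sectional curvature in a neighborhood; completeness plus simple-connectedness then upgrades this to global constant curvature, i.e., $\widetilde M$ is a space form. For (v) one argues instead directly from the warped-product structure: an extrinsic hypersphere would have to be a leaf $N\times\{t\}$ up to the umbilicity/constancy constraints, but these leaves are not umbilic with constant $H$ unless $N$ has constant curvature, again a contradiction.

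I expect the main obstacle to be the step ``extrinsic hypersphere $\Rightarrow$ ambient space form,'' because it is not purely local: one must use the \emph{global} hypotheses (completeness, simple connectedness, compactness of $M$) to pass from the vanishing of the relevant curvature components along the single hypersurface $M$ to constant curvature of all of $\widetilde M$. The cleanest route is to invoke the structure theory already cited in the introduction: among complete simply connected $\Spin$ manifolds with real Killing spinors, the only ones of constant curvature are round spheres, and the classification lists (Sasakian--Einstein, $3$-Sasakian, $G_2$, $\Spin(7)$, \dots) are mutually exclusive with the sphere in the stated dimensions; so it suffices to show an extrinsic hypersphere can exist \emph{only} in a round sphere (for the real case) or in hyperbolic space / the relevant model (for the imaginary case). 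That reduction --- extrinsic hyperspheres exist only in space forms, which is itself a known result in the literature on extrinsic spheres --- is what I would cite or reprove, and then items (i)--(v) follow by simply checking that none of the listed manifolds is a space form of non-constant... i.e., is \emph{not} a model space, which is exactly the hypothesis in each item.
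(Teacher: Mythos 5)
Your reduction of (i)--(v) to the claim that ``an extrinsic hypersphere can exist only in a space form'' is the step that fails, and it is precisely the step the paper does not make. With the paper's definition (totally umbilical hypersurface with nonzero constant mean curvature; compactness is not part of the definition) that claim is false: in any warped product $N\times_{e^{ct}}\mathbb R$ over a Ricci-flat $\Spin$ manifold $N$ (e.g.\ a K3 surface) the slices $N\times\{t\}$ are totally umbilical with nonzero constant mean curvature, while the ambient manifold is Einstein with negative scalar curvature and not of constant sectional curvature; taking $N$ compact even gives compact examples. This also undermines your direct argument for (v): the leaves of $N\times_f\mathbb R$ are umbilic with constant mean curvature for \emph{every} $N$, not only when $N$ has constant curvature, and nothing forces an extrinsic hypersphere to be a leaf. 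Likewise your ``propagation'' step (vanishing of the normal curvature components along the single hypersurface, then constant curvature on a neighborhood, then globally by completeness and simple connectedness) is unjustified: curvature identities along one hypersurface do not propagate off it, and you yourself identify this as the obstacle but then resolve it only by appealing to a ``known result'' which does not exist in this generality.

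The paper's actual route supplies exactly the missing rigidity input. After Theorem~\ref{Exto} gives $dH=0$, the spinorial Ricci identity \eqref{Ricci-hyper} shows that the hypersurface itself is Einstein with $\scal=m(m-1)(H^2+4\alpha^2)$ (Corollary~\ref{cor-E}); then Koiso's Theorem~\ref{koi} on totally umbilical Einstein hypersurfaces of complete Einstein manifolds is applied: if $H^2+4\alpha^2>0$ the hypersurface has positive Ricci curvature, while if $H^2+4\alpha^2\le 0$ then $\alpha$ is imaginary, the ambient Ricci curvature is negative, and compactness or homogeneity of $\widetilde M$ is required --- which is exactly why item (v) assumes the warped product homogeneous (see Theorem~\ref{pa-ap}). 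In either case Koiso's theorem forces $\widetilde g$ to have constant sectional curvature, contradicting the hypotheses; items (i)--(v) then follow by checking (via \cite{boyer} and \cite[Prop~3.1]{Gou}) that the listed manifolds are complete $\Spin$ manifolds carrying an $\alpha$-Killing spinor and not of constant sectional curvature. To repair your proposal you would have to replace the false general claim by Corollary~\ref{cor-E} together with Koiso-type rigidity (or an equivalent result carrying the same sign, compactness and homogeneity hypotheses).
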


 Note that (i) was already obtained in \cite{JMS} and we give here just a spinorial proof. Moreover, Theorem~\ref{THMp1} is a particular case of the more general Theorem~\ref{pa-ap}. In fact, we prove that there are no extrinsic hyperspheres in Riemannian $\Spin$ manifolds of non-constant sectional curvature and carrying an $\alpha$-Killing spinor field.

\section{Preliminaries}\label{prelim}

In this section, we briefly review 
some basic facts about $\Spinc$ structures on oriented Riemannian manifolds and their hypersurfaces \cite{friedrich, LM, BHMM, Bar98, Na11, bfg}.

\subsection{Hypersurfaces and induced \texorpdfstring{$\Spinc$}{Spinc} structures}\hfill \medskip

\textbf{Spin$^c$ structures on manifolds:} Let $(\widetilde M^{m+1}, \widetilde g)$ be a Riemannian $\Spinc$ manifold of dimension $m+1 \geq 3$ without
boundary. On such a manifold, we have  a Hermitian complex vector bundle $\Sigma \widetilde M$ endowed with a natural scalar product $\<., .\>$ and with a connection $\widetilde \nabla $ which parallelizes the metric. We denote by $\Re\<.,.\>$ the real part of the scalar product $\<., .\>$. This complex vector bundle, called the $\Spinc$ bundle, is endowed with a Clifford multiplication denoted by $``\cdot"$, $\cdot\colon T\widetilde M \rightarrow \mathrm{End}_{\mathbb C} (\Sigma \widetilde M)$, such that at every point $x \in \widetilde M$, defines an irreducible representation of the corresponding Clifford algebra. Hence, the complex rank of $\Sigma \widetilde M$ is $2^{[\frac {m+1}{2}]}$. The Clifford multiplication can be extended to exterior products of the tangent bundle and to differential forms, such that $(v_1\wedge \ldots \wedge v_k)\cdot \varphi\define v_1\cdot \ldots \cdot v_k\cdot \varphi$ if the $v_i$'s are mutually orthogonal and such that $v^\sharp\cdot \psi\define  v\cdot \psi$ for all vector fields $v_i,v$ and spinors $\psi$ and where $.^\sharp$ denotes the isomorphism $TM\to T^*M$ induced by the metric.\medskip

Given a $\Spinc$ structure on $(\widetilde M^{m+1}, g)$, one can prove that the determinant line bundle $\mathrm{det} (\Sigma \widetilde M)$ has a root of index $2^{[\frac{m+1}{2}]-1}$. We denote
by $\widetilde L$ this root line bundle over $\widetilde M$ and it is called the auxiliary line bundle associated with the $\Spinc$ structure. Locally, a $\Spin$ structure always exists. We denote by $\Sigma' \widetilde M$ the (possibly globally non-existent)
spinor bundle. Moreover, the square root of the auxiliary line bundle $\widetilde L$
always exists locally. But, $\Sigma\widetilde M = \Sigma' \widetilde M \otimes {\widetilde L}^{\frac 12}$ exists globally.  This essentially means that, while
the spinor bundle and ${\widetilde L}^{\frac 12}$
may not exist globally, their tensor product (the $\Spinc$  bundle) is
defined globally. Thus, the connection $\widetilde \nabla$ on $\Sigma \widetilde M$ is the twisted connection of the one on the
spinor bundle (coming from the Levi-Civita connection) and a fixed connection on $\widetilde L$. \medskip 

We may now define the Dirac operator $\widetilde D$ acting on the space of smooth
sections of $\Sigma\widetilde M$  by the composition of the metric connection and
the Clifford multiplication. In local coordinates this reads as
\begin{align*}\widetilde D =\sum_{j=1}^{m+1} e_j \cdot \widetilde \nabla_{e_j},
 \end{align*}
where $\{e_1,\ldots,e_{m+1}\}$ is a local oriented orthonormal tangent frame. It is a first order elliptic operator, formally self-adjoint with respect to the $L^2$-scalar product and satisfies the Schr\"odinger-Lichnerowicz formula 
\begin{align}\label{eq:Lich}
{\widetilde D}^2=\widetilde\nabla^*\widetilde \nabla+\frac{1}{4}\widetilde {\mathrm{scal}}+\frac{\i}{2}\widetilde\Omega\cdot,
\end{align}
where $\widetilde\nabla^*$ is the adjoint of $\widetilde\nabla$
with respect to the $L^2$-scalar product, $\widetilde {\mathrm{scal}}$ is the scalar curvature of $\widetilde M$, $\i\widetilde \Omega$ is the curvature of
the auxiliary line bundle $\widetilde L$ associated with the fixed connection ($\widetilde \Omega$ is a real $2$-form on $\widetilde M$) and $\widetilde \Omega \cdot$
is the extension of the Clifford multiplication to differential forms. For any $X \in \Gamma(T\widetilde M)$ and any spinor field $\psi \in \Gamma (\Sigma \widetilde M)$ , the Ricci identity is given by
\begin{align}\label{Ricci-identity}
\sum_{k=1}^{m+1} e_k \ccdot \widetilde{\mathcal{R}}(e_k,X) \psi=
\frac 12 \widetilde \Ric(X) \ccdot \psi-\frac{\i}{2} (X\lrcorner\widetilde\Omega)\cdot\psi,
\end{align}
where $\widetilde{\Ric}$ is the Ricci curvature of $(\widetilde M^{m+1}, g)$ and $\widetilde {\mathcal{R}}$ is the curvature tensor of the spinorial connection $\widetilde\nabla$. \medskip

When $m$ is even, the complex volume form $\widetilde\omega_{\C} \define \i^{[\frac{m+2}{2}]} e_1 \cdot\ldots \cdot e_{m+1}$ acts on $\Sigma \widetilde M$ as the identity, i.e., $\widetilde\omega_\C \ccdot\psi = \psi$ for any spinor $\psi \in \Gamma(\Sigma \widetilde M)$. Besides, if $m$ is odd, we have $\widetilde\omega_\C^2 =1$. We denote by $\Sigma^\pm \widetilde M$ the eigenbundles corresponding to the eigenvalues $\pm 1$, hence $\Sigma\widetilde  M = \Sigma^+ \widetilde M \oplus \Sigma^- \widetilde M$ and a  spinor field $\psi$ can be written as $\psi = \psi^+ + \psi^-$. The conjugate $\overline \psi$ of $\psi$ is defined  by $\overline \psi = \psi^+ - \psi^-$.\medskip

 As pointed out, the Clifford multiplication can be extended to differential forms and one sees that
\begin{eqnarray*}
\<\delta \cdot \psi, \psi\>= (-1)^{\frac{k(k+1)}{2}}\  \overline{\<\delta\cdot\psi, \psi\>}
\end{eqnarray*}
for any $k$-form $\delta$ and a spinor field $\psi\in \Gamma (\Sigma \widetilde M)$. 
This directly implies that for  mutually orthogonal vector fields $v_{1},\ldots, v_k$ we have
\begin{align}\label{cli-diff}
\<v_1\cdot v_2\cdot \ldots \cdot v_k \cdot \psi, \psi\>\in \left\{\begin{matrix}
                                                                   \mathbb R & \text{for }k\equiv 0,3\ \text{ mod  } 4\\
                                                                   \i \mathbb R & \text{for }k\equiv 1,2\ \text{ mod  } 4.                                                                                                                                     \end{matrix}
 \right.
\end{align}

\textbf{Spin$^c$ structures on  hypersurfaces:} The following can be e.g. found in \cite{r2}.  Any $\Spinc$ structure on  $ (\widetilde M^{m+1}, g)$ induces a $\Spinc$ structure on an oriented hypersurface $(M^m, g)$ of dimension $m\geq 2$, and we have 
$$ \Sigma M\simeq \left\{
\begin{array}{l}
\Sigma {\widetilde M}_{|_M} \ \ \ \ \ \ \text{\ \ \ if\ $m$ is even,} \\\\
 \Sigma^+ {\widetilde M}_{|_M}   \ \text{\ \ \ \ \ \ if\ $m$ is odd.}
\end{array}
\right.
$$
Furthermore Clifford multiplication by a vector field $X$, tangent to $M$, is given by 
\begin{align*}
X\ccdot_M\phi = (X\ccdot\nu\ccdot \psi)_{|_M},
\end{align*}
where $\psi \in  \Gamma(\Sigma \widetilde M)$ (or $\psi \in \Gamma(\Sigma^+ \widetilde M)$ if $m$ is odd),
$\phi$ is the restriction of $\psi$ to $M$, ``$\ccdot_M$'' the Clifford multiplication  on $M$ and $\nu$ is the unit normal
vector field of $M$ in $\widetilde M$. Also, when $m$ is odd, we obtain $\Sigma M \simeq \Sigma^- \widetilde M \vert_M$. With this identification, the Clifford multiplication is given by
$X\ccdot_M\phi = -(X\ccdot\nu\ccdot \psi)_{|_M}$. In particular, we have $\Sigma \widetilde M \simeq \Sigma M \oplus \Sigma M$.\medskip

Moreover, the corresponding auxiliary line bundle $L$ on $M$ is the restriction to $M$ of $\widetilde L$ and the curvature $2$-form $\i\Omega $ on $L$ is given by  $\i\Omega = \i \widetilde \Omega\vert_{M}$.  For every
$\psi \in \Gamma(\Sigma \widetilde M)$ ($\psi \in \Gamma(\Sigma^+ \widetilde M)$ if $m$ is odd), the real 2-forms
$\Omega$ and $\widetilde \Omega$ are related by
\begin{align*}
(\widetilde\Omega \ccdot\psi)_{|_M} = \Omega\ccdot_M\phi -
(\nu\lrcorner\widetilde\Omega)\ccdot_M\phi.
\end{align*}
We denote by $\nabla$ the $\Spinc$  connection on $\Sigma M$. Then, for all $X\in \Gamma(TM)$, we have the $\Spinc$ Gauss formula:
\begin{equation}\label{eq_spincgauss}
(\widetilde\nabla_X\psi)_{|_M} =  \nabla_X \phi + \frac 12 \mathrm{II} X \ccdot_M\phi,
\end{equation}
where $\mathrm{II}$ denotes the Weingarten map of the hypersurface. Denoting by $D$  the Dirac operator on $M$ and  by the same symbol any spinor and its restriction to $M$, we have
\begin{equation*}
D \phi = \frac{m}{2}H\phi -\nu\ccdot \widetilde D\phi-\widetilde \nabla_{\nu}\phi,
\end{equation*}
where $H = \frac 1m \trace(\mathrm{II})$ denotes the mean curvature and $D^M = D$ if $m$ is even and $D^M= D\oplus(-D)$ if $m$ is odd.

\section{Totally umbilical  hypersurfaces of \texorpdfstring{$\Spinc$}{Spinc} manifolds carrying an \texorpdfstring{$\alpha$}{alpha}-Killing spinor}
Let $(\widetilde{M}^{m+1}, \widetilde g)$ be a  Riemannian $\Spinc$ manifold with an $\alpha$-Killing spinor $\psi$ of Killing constant $\alpha\in \mC$. It is known that for $m\geq 1$, the Killing constant $\alpha$ has to be purely real or purely imaginary \cite[Theorem 1.1] {grosse-nakad}. Moreover, if $\alpha$ is real, then $\psi$ has constant norm since, for any $X \in \Gamma(T\widetilde M)$, we have
\begin{align}\label{eq:realconst} X(|\psi|^2)=2\Re \< \widetilde \nabla_X\psi, \psi\>=2\alpha \Re \< X\cdot \psi, \psi\>=0.\end{align}
Hence, real Killing spinors have no zeros. When $\alpha$ is purely imaginary, the function $\vert \psi\vert$ is a non-constant and
nowhere vanishing function \cite{baum1, grosse-nakad}. In this case, the set of zeros of $\psi$ is discrete \cite{grosse-nakad, rad, Lich1, KR}. 
Using the definition~\eqref{def:Killing} of an $\alpha$-Killing spinor $\psi$, we have 
\begin{align*}\widetilde{D}\psi = \sum_{j=1}^{m+1} e_j\ccdot \widetilde{\nabla}_{e_j} \psi= (m+1)\alpha \psi\quad \text{and}\quad
              \widetilde{D}^2\psi= (m+1)^2\alpha^2 \psi.
                                                       \end{align*}
Then, the Schr\"odinger-Lichnerowicz formula \eqref{eq:Lich} on $\widetilde{M}$ gives 
\begin{align*}
 m(m+1)\alpha^2\psi= \frac{\widetilde{\text{scal}}}{4}\psi+\frac{\i}{2} \widetilde{\Omega} \ccdot \psi.
\end{align*}

\emph{From now on we assume that $(M, g)$ is an oriented totally umbilical  hypersurface of $(\widetilde M, \widetilde g)$.} Totally umbilical  means $\mathrm{II}X=HX$ for all $X\in \Gamma(TM)$. Note that this implies $(\nabla_Y\mathrm{II})(X)=dH(Y)X$ for all $X,Y \in \Gamma(TM)$.\medskip 

We choose the local  orthonormal frame $e_i$ on $\widetilde{M}$ such that 
 $\{e_1 , \ldots, e_m\}$ is a local  orthonormal frame of $M$, $\nabla e_i =0$ and that $e_{m+1}=\nu$ a unit normal vector to $M$. The Ricci identity \eqref{Ricci-identity} on $\widetilde M$ for $X = \nu$ applied to the $\alpha$-Killing spinor $\psi$ reads
\begin{align}\label{ri-equation}  \frac{\widetilde{\Ric}(\nu,\nu)}{2}\nu\ccdot\psi+ \frac 12  \sum_{j=1}^m \widetilde {\mathrm{Ric}}(\nu, e_j) e_j \ccdot \psi - \frac{\i}{2} (\nu\lrcorner \widetilde \Omega)\ccdot \psi = 2m \alpha^2 \nu \ccdot \psi.\end{align}
where we also used the calculation $ \widetilde{R}(e_k,\nu)\psi=2\alpha^2 e_k\cdot \nu\cdot \psi$.
Now, the Codazzi-Mainardi equation \cite[Prop. 33]{ON}  gives that 
\begin{align*}
\widetilde g(\widetilde R (X, Y)U, \nu)&= g(\nabla_X \mathrm{II})(Y), U) - g((\nabla_Y \mathrm{II})(X), U)  \\
&= dH(X) \<Y, U\> - dH(Y) \<X, U\>
\end{align*}
for all $X, Y, U \in \Gamma(TM)$. Hence,  
$$\widetilde {\mathrm{Ric}} (X, \nu) =\sum_{l =1}^m \widetilde g(\widetilde R (X, e_l) e_l, \nu)=
(m-1) dH(X).$$
Replacing this in Equation \eqref{ri-equation} and taking then the Clifford multiplication by $\nu$, we obtain for $\phi=\psi|_M$ that \begin{align} \label{riccianddH}
-\frac 12 \widetilde \Ric(\nu, \nu) \varphi - \frac {(m-1)}{2} \d H\ccdot_M \varphi   +\frac{\i}{2} (\nu\lrcorner \widetilde \Omega)\ccdot_M \varphi = -2m \alpha^2 \varphi.
\end{align}
\begin{lemma}
Assume that $(M, g)$ is a totally umbilical  oriented hypersurface of $(\widetilde M, \widetilde g)$ carrying an $\alpha$-Killing spinor $\psi$. Then, for $\phi=\psi|_M$  we have
\begin{enumerate}
\item{{\bf The Schr\"odinger-Lichnerowicz formula on $M$:}}
\begin{align}\label{eq:LichM}
 m(m-1)\left( \alpha^2+\frac{ H^2}{4}\right)\phi +\frac{m-1}{2} dH\ccdot_M \phi = \frac{\text{scal}}{4} \phi +\frac{\i}{2} \Omega^M\ccdot_M\phi. 
\end{align}
\item{{\bf The Ricci identity on $M$:}}
\begin{align}\nonumber
\frac 12 \big(\mathrm{Ric} (X) - \i (X \lrcorner\Omega^M)\big)\ccdot_M \varphi =& -\frac 12 dH\ccdot_M X\ccdot_M \varphi - \frac m2 dH(X) \varphi \\ & + \frac{(m-1)}{2} H^2 X\ccdot_M\varphi +2(m-1) \alpha^2 X\ccdot_M \varphi. \label{Ricci-hyper}
\end{align}
\end{enumerate}
\end{lemma}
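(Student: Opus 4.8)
The plan is to derive both identities by restricting the ambient equations to $M$ and repeatedly using the $\Spinc$ Gauss formula \eqref{eq_spincgauss} together with the total umbilicity assumption $\mathrm{II}X=HX$. For part (1), I would start from the $\Spinc$ Gauss formula applied to the $\alpha$-Killing equation: since $\widetilde\nabla_X\psi=\alpha X\ccdot\psi$ and restricting gives $\nabla_X\phi + \tfrac12 \mathrm{II}X\ccdot_M\phi = (\alpha X\ccdot\nu\ccdot\psi)|_M$, one sees that $\phi$ satisfies a modified Killing-type equation on $M$ in terms of $\ccdot_M$. Concretely, using $X\ccdot_M\phi=(X\ccdot\nu\ccdot\psi)|_M$ (with a sign if $m$ is odd), I would obtain $\nabla_X\phi = \mp\alpha X\ccdot_M\phi - \tfrac{H}{2}X\ccdot_M\phi$, i.e.\ a Killing equation with a tangential ``Clifford-type'' term. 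Then I would plug this into the Dirac operator $D=\sum_j e_j\ccdot_M\nabla_{e_j}$ on $M$ to get an expression for $D\phi$, square it, and compare with the Schrödinger–Lichnerowicz formula \eqref{eq:Lich} on $M$ (which reads $D^2=\nabla^*\nabla+\tfrac14\mathrm{scal}+\tfrac{\i}{2}\Omega^M\ccdot_M$). The terms involving $dH$ appear precisely because $H$ need not be constant, so the commutators $[e_j\ccdot_M\nabla_{e_j},\,\cdot]$ produce $dH\ccdot_M\phi$ terms; careful bookkeeping of the constant $m(m-1)(\alpha^2+H^2/4)$ and the $\tfrac{m-1}{2}dH\ccdot_M\phi$ term should yield \eqref{eq:LichM}.

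For part (2), the cleanest route is to start from the already-derived Equation \eqref{riccianddH}, which handles the normal-normal component of the ambient Ricci identity, and combine it with the tangential part of the ambient Ricci identity \eqref{Ricci-identity} taken at $X\in\Gamma(TM)$. I would apply \eqref{Ricci-identity} with tangent $X$, use the Codazzi computation $\widetilde{\mathrm{Ric}}(X,\nu)=(m-1)dH(X)$ for the mixed Ricci component, and relate the ambient curvature operator $\widetilde{\mathcal R}(e_k,X)\psi$ to the intrinsic one on $M$ via the $\Spinc$ Gauss equation — differentiating \eqref{eq_spincgauss} a second time and using total umbilicity (so that $(\nabla_Y\mathrm{II})(X)=dH(Y)X$) produces exactly the extra $dH\ccdot_M X\ccdot_M\phi$ and $H^2 X\ccdot_M\phi$ terms. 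The ambient $\widetilde{\mathrm{Ric}}(X)$ then splits into its tangential part (which becomes $\mathrm{Ric}(X)$ via the contracted Gauss equation, up to $H^2$-corrections) plus $\widetilde{\mathrm{Ric}}(X,\nu)\nu$; Clifford-multiplying and using \eqref{riccianddH} to eliminate the normal direction should give \eqref{Ricci-hyper}.

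The main obstacle I anticipate is the sign and dimension-parity bookkeeping: the identification $\Sigma M\simeq \Sigma\widetilde M|_M$ versus $\Sigma^+\widetilde M|_M$ changes the sign in $X\ccdot_M\phi=\pm(X\ccdot\nu\ccdot\psi)|_M$, and since the Clifford multiplication by $\nu$ is applied an odd number of times in several places, these signs must be tracked consistently so that the final formulas come out parity-independent (as stated). A secondary technical point is correctly converting between the ambient curvature term $\widetilde{\mathcal R}(e_k,X)\psi$ and the intrinsic curvature on $M$: one must use the second-order consequence of \eqref{eq_spincgauss}, namely that $\nabla^2_{X,Y}\phi$ differs from $(\widetilde\nabla^2_{X,Y}\psi)|_M$ by terms built from $\mathrm{II}$, $\nabla\mathrm{II}$ and $\nu$, and then antisymmetrize. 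Once the curvature conversion is set up, the remaining manipulations are routine Clifford algebra, and the $\alpha^2$ terms simply propagate from the ambient computation $\widetilde R(e_k,\nu)\psi=2\alpha^2 e_k\ccdot\nu\ccdot\psi$ already recorded before \eqref{ri-equation}.
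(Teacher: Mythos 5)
There is a genuine gap at the very first step of your argument for part (1). From the Gauss formula \eqref{eq_spincgauss} and the Killing equation you may only conclude
\begin{equation*}
\nabla_X\phi \;=\; \alpha\, (X\ccdot\psi)|_M \;-\;\tfrac{H}{2}\,X\ccdot_M\phi ,
\end{equation*}
and the ambient Clifford term cannot be rewritten as $\mp\alpha X\ccdot_M\phi$: since $X\ccdot\psi=-X\ccdot\nu\ccdot(\nu\ccdot\psi)$, one has $(X\ccdot\psi)|_M=-X\ccdot_M(\nu\ccdot\phi)$, and $\nu\ccdot\phi$ is \emph{not} proportional to $\phi$ (indeed $\<\nu\ccdot\phi,\phi\>$ is purely imaginary, so $\nu\ccdot\phi=\pm\phi$ is impossible for $\phi\neq0$). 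Your claimed ``tangential Killing equation'' $\nabla_X\phi=\mp\alpha X\ccdot_M\phi-\tfrac{H}{2}X\ccdot_M\phi$ is therefore false, and the error is not merely cosmetic: it would give $D\phi=\pm m\alpha\phi+\tfrac{mH}{2}\phi$ instead of the correct $D\phi=m\alpha\,\nu\ccdot\phi+\tfrac{mH}{2}\phi$, and the Lichnerowicz comparison would then produce a spurious cross term $\pm m(m-1)\alpha H\,\phi$ which is absent from \eqref{eq:LichM}. The whole point of the computation (this is how the paper proceeds, keeping the ambient multiplication in \eqref{eq_spincgaussKill} and computing the Hessian \eqref{eq:nn}) is that the $\alpha H$ cross terms cancel precisely because of the anticommutation relations involving $\nu$, e.g. $e_j\ccdot\nu\ccdot e_i+e_i\ccdot\nu\ccdot e_j=2\delta_{ij}\nu$; if you suppress the $\nu$ factor at the outset this cancellation is lost.

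For part (2) your plan is close in spirit to the paper's: the paper also obtains the curvature by differentiating the Gauss formula twice (i.e. antisymmetrizing the Hessian \eqref{eq:nn}), contracting with $\sum_j e_j\ccdot_M$, and then invoking the Ricci identity \eqref{Ricci-identity} \emph{on $M$} — which already contains $\mathrm{Ric}$ and $\Omega^M$, so no conversion of intrinsic to ambient quantities is needed. Your alternative phrasing through the ambient Ricci identity plus the ``contracted Gauss equation up to $H^2$-corrections'' glosses over the term $\widetilde R(X,\nu,\nu,\cdot)$, which is not an $H^2$-correction and is not determined by the Killing spinor alone, as well as the relation between $\Omega^M$ and $\widetilde\Omega$; this route can be made to work spinorially (the normal contribution enters as $\nu\ccdot\widetilde{\mathcal R}(\nu,X)\psi=-2\alpha^2 X\ccdot\psi$), but the intrinsic computation is both shorter and what the statement actually requires. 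In any case, part (1) needs to be redone from the corrected restricted Killing equation before the bookkeeping you describe can give \eqref{eq:LichM} and \eqref{Ricci-hyper}.
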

\begin{proof}
Using Equation \eqref{eq_spincgauss} we have:
\begin{align}
 \nabla_{e_i}\phi&=\widetilde{\nabla}_{e_i}\phi- \frac{H}{2}e_i\ccdot_M \phi= \alpha {e_i}\ccdot \phi-\frac{H}{2}e_i\ccdot_M \phi\label{eq_spincgaussKill}
 \end{align}
 and 
 \begin{align}
 \nabla_{e_j}\nabla_{e_i}\phi&=\nabla_{e_j}\!\left( \alpha {e_i}\ccdot \phi-\frac{H}{2}e_i\ccdot_M \phi\right)
\! =\! \alpha \nabla_{e_j} ({e_i}\ccdot \phi)-\frac{dH(e_j)}{2}e_i\ccdot_M \phi-\frac{H}{2} e_i\ccdot_M \nabla_{e_j}\phi\nonumber\\
 &=\alpha \widetilde{\nabla}_{e_j} ({e_i}\ccdot \phi)-\frac{\alpha H}{2} e_j\ccdot \nu\ccdot e_i\ccdot \phi-\frac{dH(e_j)}{2}e_i\ccdot_M \phi \nonumber \\
 & \quad -\frac{H}{2} e_i\ccdot \nu \ccdot \left(\alpha {e_j}\ccdot \phi-\frac{H}{2}e_j\ccdot \nu\ccdot \phi\right)\nonumber\\
 &=\alpha^2 e_i\ccdot e_j \ccdot  \phi + \alpha H \delta_{ij} \nu\ccdot \phi-\frac{\alpha H}{2} e_j\ccdot \nu\ccdot e_i\ccdot \phi-\frac{dH(e_j)}{2}e_i\ccdot_M \phi\nonumber\\ 
 &\quad -\frac{H}{2} e_i\ccdot \nu \ccdot \left(\alpha {e_j}\ccdot \phi-\frac{H}{2}e_j\ccdot \nu\ccdot \phi\right)\nonumber\\
 &=\left( \alpha^2 +\frac{H^2}{4}\right) e_i\ccdot e_j \ccdot  \phi -\frac{dH(e_j)}{2}e_i\ccdot_M \phi.\label{eq:nn}
\end{align}
Now, we calculate 
\begin{align*}
-\nabla^*\nabla \phi =&\sum_{i=1}^m \nabla_{e_i} \nabla_{e_i}\phi
    = -m \left( \alpha^2 +\frac{H^2}{4}\right)   \phi -\frac{1}{2}d H \ccdot_M \phi, 
\end{align*}
and for the Dirac operator on the hypersurface we obtain 
\begin{align}
 D\phi&=  \sum_{i=1}^m e_i\ccdot_M\!\left( \alpha {e_i}\ccdot \phi-\frac{H}{2}e_i\ccdot_M \phi \right) = m \alpha \nu\ccdot \phi+m\frac{H}{2} \phi. \nonumber 
 \end{align}
 Hence, we have 
 \begin{align}
 D^2\phi &= \sum_{i=1}^m  e_i\ccdot_M \nabla_{e_i}\!\left( m \alpha \nu\ccdot \phi+m\frac{H}{2} \phi\right) \nonumber \\
 &= m \alpha\sum_{i=1}^m e_i\ccdot_M {\nabla}_{e_i} \!\left( \nu\ccdot \phi\right) +\frac{m}{2} d H\ccdot_M\phi +\frac{mH}{2}D\phi \nonumber\\
 &=m^2\alpha^2\varphi+\frac{m}{2}\d H\ccdot_M \varphi+\frac{m^2}{4}H^2\phi.\nonumber 
\end{align}
Then, the Schr\"odinger-Lichnerowicz formula \eqref{eq:Lich} on $M$ implies
\begin{align*}
 m(m-1)\left( \alpha^2+\frac{ H^2}{4}\right)\phi +\frac{m-1}{2} dH\ccdot_M \phi = \frac{\text{scal}}{4} \phi +\frac{\i}{2} \Omega^M\ccdot_M\phi, 
\end{align*}
which proves the first identity of the Lemma. Furthermore, using again Equation \eqref{eq:nn}, we obtain for the $\Spinc$ curvature tensor $\mathcal{R}$ on $M$ and for $i\neq j$:
\begin{align*}
\mathcal{R}_{e_j, e_i} \varphi =&  \nabla_{e_j}\nabla_{e_i}\varphi - \nabla_{e_i}\nabla_{e_i}\varphi\\
 =&2\left( \alpha^2 +\frac{H^2}{4}\right) e_i\ccdot e_j \ccdot  \phi -\frac{dH(e_j)}{2}e_i\ccdot_M \phi  +\frac{dH(e_i)}{2}e_j\ccdot_M \phi.
\end{align*}
Hence this implies that
\begin{align*}
\sum_{j=1}^m e_j\ccdot_M \mathcal{R}_{e_j, e_i} \varphi 
 =&2(m-1)\left( \alpha^2 +\frac{H^2}{4}\right) e_i\ccdot_M \ccdot  \phi -\frac{dH}{2}\ccdot_Me_i\ccdot_M \phi  -\frac{m}{2}dH(e_i) \phi.
\end{align*}
The last identity together with the Ricci identity \eqref{Ricci-identity} on $M$ can be written as:
\begin{align*}
\frac 12 \big(\mathrm{Ric} (X) - \i (X \lrcorner\Omega^M)\big)\ccdot_M \varphi =& -\frac 12 dH\ccdot_M X\ccdot_M \varphi - \frac m2 dH(X) \varphi \\ & + \frac{(m-1)}{2} H^2 X\ccdot_M\varphi +2(m-1) \alpha^2 X\ccdot_M \varphi. \qedhere
\end{align*}\end{proof}

 \section{Differential forms on the totally umbilical  hypersurface \texorpdfstring{$M$}{M} build from the  \texorpdfstring{$\alpha$}{alpha}-Killing spinor} 
In this section, we consider again that $(M, g)$ is a totally  umbilical  oriented hypersurface of $(\widetilde M, \widetilde g)$ carrying an $\alpha$-Killing spinor $\psi$. 
 
\begin{lemma}\label{lem_xi}  Let $\xi$ be the vector field on $M$ defined by $g(X, \xi)= -\i \< X\ccdot_M \phi, \phi\>$.  Then on $M$ we have  \begin{align}\label{eq:xi} \xi\lrcorner \Omega^M =& (m-1) |\varphi|^2 dH,\\
\label{eq:dH}
dH(\xi ) =& 0.
\end{align}
\end{lemma}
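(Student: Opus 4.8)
The plan is to extract both identities from the Ricci identity on $M$, namely equation \eqref{Ricci-hyper}, by pairing it with the spinor $\phi$ in the Hermitian inner product and separating real and imaginary parts. First I would take the Hermitian product of both sides of \eqref{Ricci-hyper} with $\phi$. Recall the definition $g(X,\xi) = -\i\<X\ccdot_M\phi,\phi\>$, which by \eqref{cli-diff} makes $g(X,\xi)$ real, so $\xi$ is a genuine real vector field. The term $\<(X\lrcorner\Omega^M)\ccdot_M\phi,\phi\>$ is, by \eqref{cli-diff} applied with $k=1$ (contracting a $2$-form with a vector yields a $1$-form), purely imaginary, and in fact equals $\i\, g(X,\text{(the vector dual to }\xi\lrcorner\Omega^M\text{-type expression)})$; more precisely one writes $-\i\<(X\lrcorner\Omega^M)\ccdot_M\phi,\phi\> = \Omega^M(X,\xi)$ after unwinding definitions, using that $\Omega^M$ is a real $2$-form and the identity $\<(X\lrcorner\Omega^M)\ccdot\phi,\phi\>$ can be rewritten by expanding $\Omega^M$ in the orthonormal frame. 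Similarly $\<dH\ccdot_M X\ccdot_M\phi,\phi\>$: when $X=e_i$ and $dH = \sum_j dH(e_j)e_j^\sharp$, the diagonal term $j=i$ contributes $dH(e_i)|\phi|^2$ (since $e_i\ccdot_M e_i\ccdot_M\phi = -\phi$, giving $-dH(e_i)|\phi|^2$) while the off-diagonal terms $e_j\ccdot_M e_i\ccdot_M$ with $j\neq i$ are products of two orthogonal vectors, hence purely imaginary pairings.

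The cleanest route: pair \eqref{Ricci-hyper} with $\phi$ and look at it as an identity of functions indexed by $X$. Writing $X = e_i$ and summing appropriately, or just keeping $X$ general, the left side gives $\tfrac12\<\mathrm{Ric}(X)\ccdot_M\phi,\phi\> - \tfrac{\i}{2}\<(X\lrcorner\Omega^M)\ccdot_M\phi,\phi\>$. Here $\<\mathrm{Ric}(X)\ccdot_M\phi,\phi\> = -\i\, g(\mathrm{Ric}(X),\xi)$ contributes to the imaginary part, while $-\tfrac{\i}{2}\<(X\lrcorner\Omega^M)\ccdot_M\phi,\phi\>$ is real and equals $-\tfrac12\Omega^M(\xi,X)$ up to sign bookkeeping. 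On the right side, $X^2 = -|X|^2$ type simplifications plus \eqref{cli-diff} tell us which pieces are real and which imaginary. The real part of the right side, after the dust settles, is $-\tfrac12 \langle dH\ccdot_M X\ccdot_M\phi,\phi\rangle_{\text{real part}} - \tfrac{m}{2}dH(X)|\phi|^2 + \big(\tfrac{m-1}{2}H^2 + 2(m-1)\alpha^2\big)\langle X\ccdot_M\phi,\phi\rangle_{\text{real part}}$; but $\<X\ccdot_M\phi,\phi\>$ is purely imaginary, so that last bracket drops out of the real part entirely, which is exactly why $H^2$ and $\alpha^2$ will not appear in \eqref{eq:xi}. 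Collecting the real parts yields precisely $\Omega^M(\xi, X) = -(m-1)|\phi|^2\, dH(X)$ for all $X$, i.e. $\xi\lrcorner\Omega^M = (m-1)|\phi|^2\, dH$ after fixing the orientation/sign convention of $\lrcorner$. To obtain \eqref{eq:dH}, I would instead contract \eqref{Ricci-hyper} with $X = \xi$ itself, or more directly pair the identity with $\phi$ after setting $X=\xi$ and take the real part: the term $dH\ccdot_M\xi\ccdot_M\phi$ paired with $\phi$ involves $g(\xi,\xi)$-type contributions whose real part combines with $dH(\xi)|\phi|^2$, and the symmetry forces $dH(\xi)=0$. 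Alternatively — and this is the slicker argument — once \eqref{eq:xi} is established, contract both sides with $\xi$: the left side is $\Omega^M(\xi,\xi) = 0$ since $\Omega^M$ is a $2$-form (antisymmetric), hence $(m-1)|\phi|^2 dH(\xi) = 0$, and since $m\geq 2$ and $\phi$ has no zeros (for real $\alpha$; discrete zeros for imaginary $\alpha$, handled by continuity), we conclude $dH(\xi) = 0$.

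I expect the main obstacle to be the sign and reality bookkeeping: correctly tracking how Clifford multiplication by $1$-forms versus vectors interacts with the interior product $\lrcorner$, the precise meaning of $\Omega^M(\xi,X)$ versus $(\xi\lrcorner\Omega^M)(X)$, and the application of \eqref{cli-diff} to isolate real versus imaginary parts in each monomial of the frame expansion. In particular, one must be careful that $\mathrm{Ric}(X)$ being symmetric means $g(\mathrm{Ric}(X),\xi) = g(X,\mathrm{Ric}(\xi))$, which is what lets the Ricci term cancel or combine cleanly; and the diagonal Clifford relation $e_i\ccdot_M e_i\ccdot_M\phi = -\phi$ (Clifford relation with signature convention) must be used consistently. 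The vanishing of the zero set of $\phi$ — constant nonzero norm when $\alpha\in\R$ by \eqref{eq:realconst}, discrete zeros when $\alpha\in\i\R$ — is needed at the very end to divide by $|\phi|^2$, but since $dH$ and $\xi$ are continuous this causes no trouble. Once the reality decomposition is set up correctly, both \eqref{eq:xi} and \eqref{eq:dH} follow by reading off, respectively, the imaginary part (giving the $\Omega^M$ relation) and then the contraction with $\xi$ (giving $dH(\xi)=0$).
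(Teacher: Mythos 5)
Your proposal is correct and follows essentially the same route as the paper: the paper's proof is exactly to take the real part of the scalar product of the Ricci identity \eqref{Ricci-hyper} with $\phi$, use \eqref{cli-diff} to discard the $\mathrm{Ric}$, $H^2$ and $\alpha^2$ terms (as purely imaginary pairings with real coefficients), identify $-\i\<(X\lrcorner\Omega^M)\ccdot_M\phi,\phi\>$ with $\Omega^M(X,\xi)$ to get \eqref{eq:xi}, and then \eqref{eq:dH} follows by antisymmetry of $\Omega^M$ upon contracting with $\xi$. The only wobble is the sign in your intermediate identity $\Omega^M(\xi,X)=-(m-1)|\phi|^2dH(X)$ (the correct bookkeeping gives $\Omega^M(X,\xi)=(1-m)|\phi|^2dH(X)$), which you flag yourself and which resolves to the stated \eqref{eq:xi}.
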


\begin{proof} We recall that the Killing constant $\alpha$ for $m\geq 2$ is either purely real or purely imaginary. Thus, for $m\geq 1$ we have $(m-1)\alpha^2\in \mathbb R$. Then the real part of the scalar product with $\varphi$ of the Ricci identity \eqref{Ricci-hyper} together with \eqref{cli-diff} gives
$$-\frac{\i}{2} (X\lrcorner \Omega^M \ccdot_M \varphi, \varphi) =\left(\frac{1}{2}dH(X) - \frac m2 dH(X)\right) |\varphi|^2 = \frac{(1-m)}{2} dH(X) |\varphi|^2.$$
Since $(\xi\lrcorner \Omega^M)(X)=-(X\lrcorner \Omega^M)(\xi)= -g(X\lrcorner \Omega^M, \xi)$, we obtain \eqref{eq:xi} and hence \eqref{eq:dH}.
\end{proof}

Next we define differential forms on $M$ depending on whether $\alpha$ is real or imaginary. The first of these forms has been introduced in \cite{herzlich-moroianu}.

\begin{lemma}
On the totally umbilical  hypersurface $M$ of $\widetilde M$, we define differential $p$-forms $\omega_p$ by  
\begin{align*}
\omega_p(e_1, e_2, \ldots, e_p) \define \<(e_1\wedge e_2\wedge\ldots\wedge e_p)\ccdot_M\varphi, \varphi\>. \end{align*} If the Killing constant  $\alpha$ is \emph{real}, we have for all $p\geq 1$,
\begin{align}\label{d-omega}
& d\omega_p = \tfrac{1}{2} H (1-(-1)^p) \omega_{p+1},\\
\label{dH-real}
& dH \wedge \omega_{2p} =0.
\end{align}
\end{lemma}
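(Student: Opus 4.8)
The plan is to compute the exterior derivative of $\omega_p$ using the Killing equation and the totally umbilical structure, and then to extract \eqref{dH-real} as a consequence of the $p$ odd versus $p$ even dichotomy once $\alpha$ is real.

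\textbf{Step 1: Derivative formula via the $\Spinc$ Gauss formula.} First I would differentiate $\omega_p$. Since $\nabla e_i = 0$ at the point under consideration, we have
\[
d\omega_p(e_0, e_1, \ldots, e_p) = \sum_{k=0}^{p} (-1)^k \, e_k\bigl( \omega_p(e_0,\ldots,\widehat{e_k},\ldots,e_p)\bigr) = \sum_{k=0}^p (-1)^k\, e_k\bigl\< (e_0\wedge\cdots\widehat{e_k}\cdots\wedge e_p)\ccdot_M\varphi, \varphi\bigr\>,
\]
and each term expands, using $\nabla$ parallelizes the metric, into two copies of $\Re\<(e_0\wedge\cdots\widehat{e_k}\cdots\wedge e_p)\ccdot_M \nabla_{e_k}\varphi, \varphi\>$. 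By \eqref{eq_spincgaussKill}, $\nabla_{e_k}\varphi = \alpha\, e_k\ccdot\varphi - \tfrac{H}{2} e_k\ccdot_M\varphi$, where $e_k\ccdot$ is the ambient Clifford multiplication, related to $\ccdot_M$ by $e_k\ccdot_M = e_k\ccdot\nu\ccdot$ (up to sign depending on parity). The key algebraic manipulation is to move $e_k\ccdot_M$ (respectively $e_k\ccdot\nu\ccdot$) through the wedge factor $(e_0\wedge\cdots\widehat{e_k}\cdots\wedge e_p)$ and reassemble the alternating sum into $(p+1)$-forms. This is exactly the computation done in \cite{herzlich-moroianu}; the $\alpha$-term produces a multiple of $\omega_{p+1}$, while the $H$-term produces another multiple of $\omega_{p+1}$ (since $H$ is scalar it passes through). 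The parity factor $(1-(-1)^p)$ arises because when $p$ is even, the contributions from the $\alpha$-term cancel in pairs (one checks this using that $\<\,\cdot\,\rangle$ is Hermitian together with the reality/imaginarity pattern \eqref{cli-diff}), whereas for $p$ odd they add; the coefficient $\tfrac12 H$ is pinned down by the normalization in \eqref{eq_spincgaussKill}. This yields \eqref{d-omega}.

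\textbf{Step 2: Deriving $dH\wedge\omega_{2p} = 0$.} Now take $p$ even, say $p = 2p_0$. From \eqref{d-omega} we get $d\omega_{2p_0} = 0$, i.e. $\omega_{2p_0}$ is closed. On the other hand, apply \eqref{d-omega} with odd index $2p_0 - 1$: $d\omega_{2p_0-1} = H\,\omega_{2p_0}$. Differentiating this identity gives $0 = d(H\omega_{2p_0}) = dH\wedge\omega_{2p_0} + H\,d\omega_{2p_0} = dH\wedge\omega_{2p_0}$, using that $d\omega_{2p_0} = 0$ from the previous line. Since $p_0\geq 1$ is arbitrary this proves \eqref{dH-real} for all $2p\geq 2$; for the boundary case one must also handle $\omega_2$, which requires knowing $\omega_1$ is defined (it is, $p\geq 1$), so $d\omega_1 = H\omega_2$ and the same argument applies. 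One should double-check that the case $2p = 0$ is not claimed (indeed the statement is for $\omega_{2p}$ with the understanding $p\geq 1$, consistent with the hypothesis $p\geq 1$ in the lemma).

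\textbf{Main obstacle.} The genuinely delicate part is Step 1: correctly tracking signs when commuting the Clifford factor $e_k\ccdot_M$ past the $p$-fold wedge product and through the conversion $\ccdot_M \leftrightarrow \ccdot\nu\ccdot$, and then verifying that the resulting alternating sum really collapses to a clean multiple of $\omega_{p+1}$ with the stated parity coefficient rather than producing extra lower-degree terms. The cancellations that kill the $\alpha$-contribution when $p$ is even rely on pairing the $k$-th summand with a "transposed" term and invoking $\<\delta\ccdot\varphi,\varphi\> = (-1)^{k(k+1)/2}\overline{\<\delta\ccdot\varphi,\varphi\>}$ together with $\widetilde D\varphi$-type identities; getting this bookkeeping exactly right (and consistent with the already-established special cases in the earlier lemmas, e.g. the $\xi$ computation in Lemma~\ref{lem_xi}) is where care is needed. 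Everything after that is formal manipulation with $d$ and the wedge product.
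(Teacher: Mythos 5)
Your overall strategy is the paper's: differentiate $\omega_p$ using \eqref{eq_spincgaussKill} and reassemble the alternating sum, and your Step 2 (differentiate $d\omega_{2k-1}=H\,\omega_{2k}$ and use $d\omega_{2k}=0$) is exactly the argument in the paper, so that part is fine. The gap is in Step 1, where your account of the crucial cancellation --- the very point you flag as ``the genuinely delicate part'' --- is backwards and, followed literally, would not produce \eqref{d-omega}. In the actual computation the $\alpha$-terms do \emph{not} produce a multiple of $\omega_{p+1}$, and the parity factor $(1-(-1)^p)$ does \emph{not} come from them: for each fixed $j$ the two $\alpha$-contributions (the one where $\alpha e_j\ccdot\varphi$ sits in the first slot and the one where it sits in the second) cancel each other for \emph{every} $p$, because Clifford multiplication by the unit vector $e_j$ is skew-Hermitian, $e_j$ commutes past the remaining $2p$ mutually orthogonal ambient factors $e_i\ccdot\nu$ ($i\neq j$), and $\alpha$ is real, so the constant coming out of the second slot is again $\alpha$ and not $-\alpha$. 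The parity factor is produced entirely by the $H$-terms: there, for each $j$, moving $e_j\ccdot_M$ past the $p$ remaining tangential factors costs $(-1)^p$ in one of the two summands, so the pair cancels when $p$ is even and doubles when $p$ is odd, which is precisely $\tfrac12 H\,(1-(-1)^p)\,\omega_{p+1}$.

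A quick sanity check would have caught this: the right-hand side of \eqref{d-omega} contains no $\alpha$ at all, so if, as you claim, the $\alpha$-contributions ``add'' for odd $p$, they would yield an $\alpha$-dependent term (built from Clifford products involving $\nu$) that cannot combine with the $H$-term to give $H\,\omega_{p+1}$; the only outcome consistent with the statement is that they vanish identically, for all $p$. Since your proposal defers the entire computation (to \cite{herzlich-moroianu}, which the paper cites only for introducing the forms $\omega_p$, not for this identity) and the mechanism you describe for it is the wrong one --- the roles of the $\alpha$- and $H$-terms are interchanged, and no $\widetilde D$-type identities are needed --- Step 1 as written is a genuine gap, even though the correct bookkeeping, once done as in the paper, confirms the formula and your Step 2 then goes through verbatim.
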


\begin{proof} Using $[e_i,e_j]=0$, Equations \eqref{eq_spincgaussKill} and \eqref{cli-diff}, we have 
\begin{align*}
 (p+1)& d\omega_p (e_1, e_2, \ldots, e_p, e_{p+1}) \\ 
 =&
  \sum_{j=1}^{p+1} (-1)^{j-1} e_j\big(\omega_p(e_1, e_2, \ldots, \hat e_j, \ldots, e_p, e_{p+1})\big)  \\ 
=& \sum_{j=1}^{p+1} (-1)^{j-1} \<e_1\ccdot_M  \ldots \ccdot_M\hat e_j\ccdot_M \ldots \ccdot_M  e_{p+1}\ccdot_M \left( \alpha e_j \ccdot \varphi - \tfrac{1}{2} H e_j\ccdot_M \varphi\right), \varphi\> \\ 
& + \sum_{j=1}^{p+1} (-1)^{j-1} \<e_1\ccdot_M  \ldots \ccdot_M\hat e_j\ccdot_M \ldots \ccdot_M  e_{p+1}\ccdot_M \varphi, \alpha e_j \ccdot \varphi - \tfrac{1}{2} H e_j\ccdot_M \varphi\> \\
=& \alpha \sum_{j=1}^{p+1} (-1)^{j-1} \left(\<e_1\ccdot \nu\ccdot  \ldots \ccdot \hat e_j \ccdot\hat\nu  \ldots \ccdot  e_{p+1}\ccdot \nu  \ccdot e_j \ccdot\varphi, \varphi\> \right.  \\ 
& \left.+ 
\<e_1\ccdot \nu\ccdot  \ldots \ccdot \hat e_j \ccdot\hat\nu  \ldots \ccdot  e_{p+1}\ccdot \nu  \ccdot\varphi, e_j\ccdot \varphi\>\right)\\ 
& 
-\tfrac{1}{2} H  \sum_{j=1}^{p+1} (-1)^{j-1} \left(\<e_1\ccdot_M \ldots \ccdot_M\hat e_j\ccdot_M \ldots \ccdot_M     e_{p+1}\ccdot_M  e_j \ccdot_M \varphi, \varphi\>\right. \\ &
\left. +\<e_1\ccdot_M  \ldots \ccdot_M\hat e_j\ccdot_M \ldots \ccdot_M     e_{p+1}\ccdot_M  \varphi, e_j\ccdot_M \varphi\>\right) \\ 
=&
-(p+1)\tfrac{1}{2} H \Big((-1)^p -1\Big) w_{p+1} (e_1, e_2, \ldots, e_p, e_{p+1}).
\end{align*}
This proves \eqref{d-omega}. In particular,  we obtained for all $k\geq 1$
\begin{equation*}
 \left\{
\begin{array}{rcl}
d\omega_{2k}&=&0,\\
d\omega_{2k-1}&=&H \omega_{2k}.
\end{array}\right.
\end{equation*}
Differentiating the last equality  we obtain $dH \wedge \omega_{2k}  =0$ for any $k \geq 1$.
\end{proof}

\begin{lemma}
On the totally umbilical oriented hypersurface $M$ of $\widetilde M$, we define differential $p$-forms $\eta_p$ by  
\begin{align*}\eta_p(e_1, e_2, \ldots, e_p) \define \<(e_1\wedge e_2\wedge \ldots\wedge e_p)\ccdot_M \varphi, \nu \ccdot\varphi\>.\end{align*}
If the Killing constant $\alpha$ is in $\i\mathbb R\setminus \{0\}$, we have  for $p\geq 1$,
\begin{align}
& d\eta_p = -\tfrac{1}{2} H (1 +(-1)^p)\eta_{p+1},\nonumber\\
& dH \wedge \eta_{2p-1} = 0,  \label{dH-img}
\end{align}
\end{lemma}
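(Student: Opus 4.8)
The plan is to mirror the computation for the forms $\omega_p$ but using the modified pairing against $\nu\ccdot\varphi$ instead of $\varphi$. First I would fix the parallel frame $\{e_1,\dots,e_m\}$ with $\nabla e_i=0$ and compute $d\eta_p$ from the Koszul-type formula, using that $[e_i,e_j]=0$ and that the covariant derivatives of $\varphi$ are given by \eqref{eq_spincgaussKill}. The new feature is that I must also differentiate the factor $\nu\ccdot\varphi$ in the second slot; since $\nu$ is not parallel but the hypersurface is totally umbilical, $\widetilde\nabla_{e_j}\nu=-\mathrm{II}e_j=-He_j$, and $\widetilde\nabla_{e_j}\varphi=\alpha e_j\ccdot\varphi$, so $\widetilde\nabla_{e_j}(\nu\ccdot\varphi)$ is explicitly computable. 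Translating everything into $\ccdot_M$ via $X\ccdot_M\varphi=(X\ccdot\nu\ccdot\psi)|_M$, the two halves of the Leibniz rule combine and the contributions proportional to $\alpha$ should cancel (as in the $\omega_p$ case, using that moving $e_j$ past $p$ mutually orthogonal vectors produces the sign $(-1)^p$ together with the reality/imaginarity pattern of \eqref{cli-diff}, now shifted because we paired against $\nu\ccdot\varphi$ rather than $\varphi$), leaving only the $H$-term with the parity factor $-(1+(-1)^p)$; that parity factor is exactly what flips relative to the $\omega_p$ computation because the extra Clifford factor $\nu$ changes the degree bookkeeping by one. This yields the first displayed identity.

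\textbf{The exterior derivative relation and its consequence.} Once $d\eta_p=-\tfrac12 H(1+(-1)^p)\eta_{p+1}$ is established, specializing to even and odd $p$ gives $d\eta_{2k-1}=0$ and $d\eta_{2k}=-H\eta_{2k+1}$ for all $k\ge 1$. Differentiating the identity $d\eta_{2k}=-H\eta_{2k+1}$ and using $d^2=0$ together with $d\eta_{2k+1}=0$ yields $0=-dH\wedge\eta_{2k+1}-H\,d\eta_{2k+1}=-dH\wedge\eta_{2k+1}$, i.e.\ $dH\wedge\eta_{2k-1}=0$ for all $k\ge 1$ after reindexing. This gives \eqref{dH-img}.

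\textbf{Main obstacle.} The one genuinely delicate point is the sign/parity bookkeeping when converting between $\ccdot$ on $\widetilde M$ and $\ccdot_M$ on $M$ and when commuting $e_j$ through the wedge: one has to carefully track how the identification $\Sigma M\simeq\Sigma^\pm\widetilde M|_M$ and the formula $X\ccdot_M\varphi=\pm(X\ccdot\nu\ccdot\psi)|_M$ interact with the conjugation-type reality constraints \eqref{cli-diff}, since here the right-hand slot is $\nu\ccdot\varphi$ rather than $\varphi$, effectively shifting every parity class by one. I would organize this by first recording the identity $\<(e_{i_1}\wedge\cdots\wedge e_{i_k})\ccdot_M\varphi,\nu\ccdot\varphi\>$ in terms of $\<\,(\nu\wedge e_{i_1}\wedge\cdots)\ccdot\psi,\psi\>$-type expressions on $\widetilde M$, applying \eqref{cli-diff} there, and only then running the Leibniz computation; this isolates all the parity subtleties into one preliminary lemma-free observation and makes the cancellation of the $\alpha$-terms transparent. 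The rest is the same routine manipulation as in the $\omega_p$ lemma.
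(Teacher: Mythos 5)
Your route to the first identity is the same as the paper's: a Leibniz computation in a frame with $\nabla e_i=0$, using \eqref{eq_spincgaussKill} in the first slot, differentiating $\nu\ccdot\varphi$ in the second slot via $\widetilde\nabla_{e_j}\nu=-\mathrm{II}(e_j)=-He_j$, and cancelling the $\alpha$-terms by means of \eqref{cli-diff} together with $\bar\alpha=-\alpha$. That part is sound; in fact your insistence on the Weingarten term is essential, since for even $p$ the surviving $H$-contribution comes precisely from $(\widetilde\nabla_{e_j}\nu)\ccdot\varphi$, an ingredient the paper's displayed computation treats only implicitly.

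The genuine gap is in your deduction of \eqref{dH-img}. From $d\eta_{2k-1}=0$ and $d\eta_{2k}=-H\eta_{2k+1}$ for $k\geq1$, applying $d^2=0$ gives $dH\wedge\eta_{2k+1}=0$ for $k\geq1$, that is $dH\wedge\eta_{2k-1}=0$ only for $k\geq2$; your ``reindexing to all $k\geq1$'' is an off-by-one and never reaches $\eta_1$. But $dH\wedge\eta_1=0$ is part of the claim (the case $p=1$) and is exactly the instance invoked later in Case 2 of the proof of Theorem~\ref{Exto}, where it yields $\<e_i\ccdot\varphi,\varphi\>=0$, so it cannot be dropped. (The paper's own write-up obscures this point: its line ``$d\eta_{2p}=-H\eta_{2p-1}$'' is degree-inconsistent and must read $\eta_{2p+1}$, after which the same off-by-one would appear there as well.) The repair is to run your computation also for $p=0$, i.e.\ for the function $\eta_0\define\<\varphi,\nu\ccdot\varphi\>$: the $\alpha$-terms again cancel because $\alpha\in\i\R\setminus\{0\}$, while the Weingarten term gives $X(\eta_0)=H\<X\ccdot\varphi,\varphi\>=-H\,\eta_1(X)$, hence $d\eta_0=-H\eta_1$; then $0=d^2\eta_0=-dH\wedge\eta_1-H\,d\eta_1=-dH\wedge\eta_1$ supplies the missing case. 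With this addition your argument proves the full statement.
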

\begin{proof}
With an analog calculation as in the last lemma and using $\alpha\in \i \mathbb R$  we obtain
\begin{align*}
(p+1) & d\eta_p (e_1, e_2, \ldots, e_p, e_{p+1}) \\ 
=&\sum_{j=1}^{p+1} (-1)^{j-1} \<e_1\ccdot_M  \ldots \ccdot_M\hat e_j\ccdot_M \ldots \ccdot_M e_{p+1}\ccdot_M \big( \alpha e_j \ccdot \varphi - \tfrac{1}{2} H e_j\ccdot_M \varphi\big), \nu\ccdot\varphi\> \\ & +\sum_{j=1}^{p+1} (-1)^{j-1} \<e_1\ccdot_M \ldots \ccdot_M\hat e_j\ccdot_M \ldots \ccdot_M  e_{p+1}\ccdot_M \varphi, \nu\ccdot \widetilde \nabla_{e_j}  \varphi - \tfrac{1}{2} H e_j\ccdot_M\nu\ccdot \varphi\> \\
=& 
 -\tfrac{1}{2} H  \sum_{j=1}^{p+1} (-1)^{j-1} \<e_1\ccdot_M  \ldots \ccdot_M\hat e_j\ccdot_M \ldots \ccdot_M   e_{p+1}\ccdot_M  e_j \ccdot_M \varphi, \nu\ccdot \varphi\> \\ & 
 -\tfrac{1}{2} H  \sum_{j=1}^{p+1} (-1)^{j-1} \<e_1\ccdot_M \ldots \ccdot_M\hat e_j\ccdot_M \ldots \ccdot_M  e_{p+1}\ccdot_M  \varphi, e_j\ccdot_M\nu\ccdot \varphi\> \\
=&-(p+1)\tfrac{1}{2} H \Big((-1)^p +1\Big) \eta_{p+1} (e_1, e_2, \ldots, e_p, e_{p+1}),
\end{align*}
and, thus, for all $p\geq 1$
\begin{equation*}
 \left\{
\begin{array}{rl}
d\eta_{2p-1}=&0,\\
d\eta_{2p}=&-H \eta_{2p-1}.
\end{array}\right.
\end{equation*}
Differentiating  the last equality then again gives  $dH \wedge \eta_{2p-1}  =0$ for any $p \geq 1$. 
\end{proof}

\section{Proof of the main result: Theorem ~\ref{Exto}}\label{Mmain}
The goal of this section is to prove Theorem~\ref{Exto}. If $\widetilde{M}$ is spin, $\Omega^M=0$ and the statement follows directly from \eqref{eq:dH}. For the general $\Spinc$ case we split the proof into the two cases:  

{\it Case 1}: The $\alpha$-Killing spinor $\psi$ is  a real Killing spinor ($\alpha \in \mathbb R$)

{\it Case 2}: The $\alpha$-Killing spinor $\psi$ is an  imaginary Killing spinor ($\alpha \in i \mathbb R \setminus\{0\}$).\medskip

First we note, that the hypersurfaces in Section~\ref{Mmain} is not assumed to be orientable. But since all our calculations are local, we at least have locally always an induced $\Spinc$ structure as in Section~\ref{prelim} and can use all the spinorial formula from above.

\begin{proof}[\underline{Proof of Theorem~\ref{Exto} for Case 1}]
We prove this by contradiction. In fact, assume that $dH$ is not identically zero. Then, there is a point $x\in M$ and a neighborhood $U$ of $x$ where $\text{grad}_g H$ is nonzero. Hence, we find a local orthonormal frame $(e_1,\ldots, e_{m-1}, Z= \frac{\text{grad}_g H}{|\text{grad}_g H|})$ of $TU$. Then,  
$(e_1,\ldots, e_{m-1}, Z, \nu)$ is a local orthonormal frame of $\widetilde M$ on $U$. Note that then $dH\cdot_M = \text{grad}_g \cdot_M$.\medskip 

 
 First we prove the claim for $m>4$:  From Equation (\ref{dH-real}), it is clear that for $2k \leq m-1$ and for each subset $i_1, \ldots, i_{2k}$ of $\{1, \ldots, m-1\}$, we have 
$$\omega_{2k} (e_{i_1}, \ldots, e_{i_{2k}}) = 0.$$
Thus the spinors in  $\{\varphi\}\cup \{ e_{i_1}\ccdot_M e_{i_2} \ccdot_M \varphi\}_{i_1<i_2}\cup ..\cup \{{e_{i_1}\ccdot_M\ldots\cdot_Me_{i_{2l}}\ccdot_M} \varphi\}_{i_1<\ldots <i_{2l}}$, where the $i_j\in \{1, \ldots, m-1\}$, $l= [\frac{m-1}{2}]$ and $\phi=\psi|_x\in \Sigma_x\widetilde{M}$, are mutually orthogonal. Hence they span a complex vector subspace of $\Sigma_x \widetilde{M}$ of complex dimension 
$$\binom{m-1}{0} + \binom{m-1}{2} + \ldots +\binom{m-1}{2l}=2^{m-2}.$$
Since $\mathrm{dim} (\Sigma_x M) = 2^{[\frac m2]}$, we obtain
$ 2^{[\frac m2]} \geq 2^{m-2}$.
It follows that $[\frac m2] \geq m-2$, so $m \leq 4$, which is a contradiction and finishes the proof for $m>4$.\medskip

Let now $m=4$. In dimension $4$  the spinor bundle splits into positive and negative spinors $\Sigma \widetilde{M}|_M\cong \Sigma M\cong \Sigma^+M\oplus \Sigma^-M$, both $\Sigma^{\pm}M$ have $\mathbb C^ 2$-fibers, and we have $\phi\define \psi|_M=\phi^++\phi^-$ with $\phi_{\pm}\in \Gamma(\Sigma^\pm M)$. Moreover,  $e_i\cdot_M\colon \Gamma(\Sigma^\pm M)\to \Gamma (\Sigma^\mp M)$ and   $\overline {\varphi}=- e_1\ccdot_M e_2\ccdot_M e_3\ccdot_M Z\ccdot_M \varphi$.\medskip 

Using Equation \eqref{dH-real} we have
\begin{align*}
 0=&(dH\wedge \omega_2)(Z, e_2,e_3)= dH(Z) \omega_2(e_2,e_3)= dH(Z) \< e_2\cdot_M e_3\cdot_M \varphi, {\varphi}\>\\ =& \< Z\cdot_M e_2 \cdot_M e_3\cdot_M \varphi , dH\cdot_M\varphi\>\\
 = &- \<e_1\cdot_Me_1\cdot_M  e_2 \cdot_M e_3\cdot_M Z\cdot_M \varphi , dH\cdot_M\varphi\>
 =\< \overline{\varphi}, dH\cdot_Me_1\cdot_M\varphi\>
\end{align*}
and analogously $\< dH\cdot_Me_i\cdot_M\varphi, \overline{\varphi}\>=0$ for $i=1,2,3$.\medskip 

Let $\xi$ be as defined in Lemma~\ref{lem_xi}. Then \eqref{eq:dH} implies that $\xi$ is in the span of $\{e_1,e_2,e_3\}$. Taking the Clifford multiplication with $dH\ccdot_M$ in the Ricci identity  \eqref{Ricci-hyper} for $X = \xi$ and then the imaginary part of the scalar multiplication with $\overline\varphi$, we obtain
$$0=\<\xi\lrcorner \Omega^M,Z\>\<\varphi,\overline{\varphi}\>.$$
Together with \eqref{eq:xi} this implies  $\vert dH\vert^2 |\varphi|^2 \<\varphi,\overline{\varphi}\>=0$. Since $\varphi\neq 0$ for a real Killing spinor and $dH|_U \neq 0$ by assumption, we obtain $\<\varphi,\overline{\varphi}\>=0$.  Let $X \in \Gamma(TM)$ with $|X|=1$. Using $\nu\cdot \colon \Gamma(\Sigma^\pm M)\to \Gamma(\Sigma^\pm M)$, see \cite[p. 31]{ginoux-these},  we calculate $\widetilde\nabla_X \overline \varphi = -\alpha X\ccdot\overline \varphi $. Differentiating  $\<\varphi,\overline{\varphi}\>=0$ and using Equation \eqref{eq_spincgauss} we then obtain
$$\alpha \<X\cdot \varphi, \overline \varphi\> - \tfrac{1}{2} H \<X\cdot_M\varphi, \overline \varphi\> + \tfrac{1}{2} H \<\varphi, X\cdot_M\overline\varphi\> - \alpha \<\varphi, X\cdot \overline \varphi\> =0.$$
Hence, we have 
\begin{align}
2\alpha  \<X\cdot \varphi, \overline \varphi\> =& H  \<X\cdot_M\varphi, \overline \varphi\>.\label{derivative2}
\end{align}
Let also $e_4\define Z$. We calculate using $\nabla_{e_j} e_i=0$ (and hence $\widetilde{\nabla}_{e_j} e_i = H \delta_{ij} \nu$) that 
\begin{align}
 e_j\<e_i\cdot \phi, \overline\phi\>&= \< \widetilde \nabla_{e_j} (e_i\cdot \phi),\overline{\phi}\> +\<e_i\cdot \phi, \widetilde\nabla_{e_j} \overline{\phi}\>\nonumber\\
 &= \< H\delta_{ij} \nu\cdot \phi +  e_i\cdot \widetilde{\nabla}_{e_j} \phi,\overline{\phi}\> +\<e_i\cdot \phi, -\alpha e_j\cdot \overline{\phi}\>\nonumber\\ 
 &= \< H\delta_{ij} \nu\cdot \phi, \overline{\phi}\>+ \< \alpha e_i \cdot e_j\cdot \phi,\overline{\phi}\> -\<e_i\cdot \phi, \alpha e_j\cdot \overline\phi\>\nonumber\\ 
 &= H\delta_{ij} \<  \nu\cdot \phi, \overline{\phi}\>\label{eq:aux1}
\end{align}
and
\begin{align}
 e_j&\<e_i\cdot_M \phi, \overline\phi\>= \< e_i\cdot_M \nabla_{e_j} \phi,\overline{\phi}\> +\<e_i\cdot_M \phi, \nabla_{e_j} \overline{\phi}\>\nonumber\\
 &= \< \alpha e_i\cdot_M \cdot e_j\cdot \phi\!-\!\tfrac{1}{2} H e_i\cdot_M e_j\cdot_M \phi,\overline{\phi}\> \!+\!\<e_i\cdot_M \phi, -\alpha e_j\cdot \overline\phi-\tfrac{1}{2} H e_j\cdot_M \overline{\phi}\>\nonumber\\ 
 &=-H \< e_i \cdot_M e_j\cdot_M \phi, \overline{\phi}\>.\label{eq:aux2}
\end{align}
By \eqref{cli-diff} and $\overline{\phi}=-e_1\ccdot_M e_2\ccdot_Me_3\ccdot_MZ\ccdot_M \phi$,  the left hand sides of both of the equations \eqref{eq:aux1} and \eqref{eq:aux2} are real. On the other hand the right hand side of \eqref{eq:aux1} is imaginary and the one of \eqref{eq:aux2} is imaginary for $i\neq j$ and $0$ for $i=j$. Hence, all sides have to be zero. Using this when differentiating \eqref{derivative2} for $X=e_i$ in direction of $Z$, we obtain 
\[ Z(H) \<e_i\cdot_M \phi, \overline{\phi}\>=0 \text{ for all }i=1,\ldots, 4.\]
Hence, $\<e_i\cdot_M \phi, \overline{\phi}\>=0$ and thus
\begin{equation}\label{eq:re4} \Re\<e_i\cdot_M \phi_+, \phi_-\>=0  \text{ for all }i=1,\ldots, 4.\end{equation}
We note that in dimension $4$ every non-zero element $\psi\in \Sigma_+M|_y$ for $y\in U$ gives rise to a real basis $e_i\cdot_M \psi$ of $\Sigma_+M|_y$ with respect to the scalar product $(.,.)\define \Re \< .,.\>$. 
Hence, Equation \eqref{eq:re4} implies that at each point $y\in U$, either $\varphi_+ = 0$ or $\varphi_-$ is perpendicular to the four dimensional real vector space $\Sigma_+M|_y$ w.r.t this real scalar product, i.e, $\varphi_- = 0$.\medskip

Since $|\varphi|^2=|\varphi_+|^2+|\varphi_-|^2$ is of constant norm  by \eqref{eq:realconst}, we obtain that $\varphi_+ = 0$ or $\varphi_- =0$ on all of $U$. Assume that $\varphi_- = 0$ on $U$ (the other case is analogous), then 
 $$0=\nabla_X \varphi_- = \alpha X\ccdot \varphi_+ + \tfrac{1}{2} H X\ccdot_M \varphi_+.$$ 
The real part of the scalar product of the last identity with $X\ccdot_M \varphi_+$ gives
$$ \tfrac{1}{2} H |X|^2 \vert\varphi_+\vert^2 = 0.$$
Since $\varphi$ is  non-zero, $\varphi_+$ has no zeros on $U$ and we get that $H=0$ on $U$.  Thus, $dH = 0$ on $U$ which gives the contradiction.
\end{proof}

\begin{proof}[\underline{Proof of Theorem~\ref{Exto} for Case 2}]  Assume that $dH$ is not identically zero. Then, there is a point $x\in M$ and a neighborhood $U$ of $x$ where $\text{grad}_g H$ is nonzero. Hence, we find a local orthonormal frame $(e_1,\ldots, e_{m-1}, Z= \frac{\text{grad}_g H}{|\text{grad}_g H|})$ of $TU$. Then, we have with $(e_1,\ldots, e_{m-1}, Z, \nu)$ again a local orthonormal frame of $\widetilde M$ on $U$. \medskip 

On all of $U$ we have by Equation \eqref{dH-img} that $d H \wedge \eta_1 = 0$. Then with $dH(e_i)=0$ we obtain
$$0 = d H \wedge \eta_1 \left(\frac{\text{grad}_g H}{|\text{grad}_g H|^2}, e_i\right) = \eta_1 (e_i) = -\<e_i \ccdot\varphi, \varphi\>$$ for all $1\leq i\leq m-1$ which will used in following without any further comment. \medskip

\underline {We consider three different subcases}: First assume that $\<dH\cdot \varphi, \varphi\>=\<\nu \cdot \varphi, \varphi\>=0$ on $U$. Note that for all $X\in \Gamma (TM)$ the vector $V$, defined on $\widetilde{M}$ by $\widetilde{g}(V,X) \define \i \< X\ccdot \varphi, \varphi\>$ , vanishes on $U$, see \cite{rad, baum1, baum2, baum3, grosse-nakad}. From \cite{baum1, rad} we have $\widetilde{\nabla}_X V=2\alpha |\varphi|^2 X$ for all $X\in \Gamma(T\widetilde{M})$. Since $V\equiv 0$, this implies that $\varphi\equiv 0$ on $U$.
This gives a contradiction in the first case.\medskip 

Second let $\<dH\cdot \varphi, \varphi\>=0$ and let $\<\nu \cdot \varphi, \varphi\>$ be nonzero on a possibly smaller $U$. In particular, we can make $U$ small enough such that $\phi$ has no zeros on $U$. Then, 
the   imaginary part  of the  scalar product  of Equation~\eqref{riccianddH} with $\nu\ccdot\varphi$  gives
\begin{align*}\widetilde{\text{Ric}}(\nu, \nu) = 4m\alpha^2.\end{align*}
Reinserting into Equation \eqref{riccianddH} gives
\begin{align}\label{contr}
\frac{m-1}{2} dH\ccdot_M\varphi =& \frac{\i}{2} (\nu\lrcorner\widetilde \Omega)\ccdot_M \phi,
\end{align}
and hence 
\begin{align*}
\frac{m-1}{2} \<dH\ccdot_M\varphi, Z\ccdot_M \phi\> =& \frac{\i}{2} \left(\sum_{i=1}^{m-1} \widetilde \Omega(\nu, e_i)\< e_i\ccdot_M \varphi, Z\ccdot_M \phi\> + \widetilde \Omega(\nu, Z)|Z\ccdot_M \varphi|^2\right). 
\end{align*}
Taking the imaginary part of the last equality implies $\widetilde \Omega(\nu, Z)|\phi|^2=0$.  Since $\phi$ has no zeros, we obtain  $\widetilde \Omega (Z, \nu) =0$. The real part of the scalar product of Equation \eqref{eq:LichM} with $e_i\cdot \phi$ then gives
\begin{align*} \frac{m-1}{2}&\< dH\cdot \nu\cdot \phi, e_i\cdot \phi\>\\ = &-\frac{1}{2}\text{Im} \left( \sum_{j<k} \Omega^M(e_j,e_k)\< e_j\ccdot e_k\ccdot \phi, e_i\ccdot \phi\> + \sum_{j} \Omega^M(e_j,Z)\< e_j\ccdot Z \ccdot \phi, e_i\ccdot \phi\> \right) =0, \nonumber
\end{align*}
where the last equality uses Equation \eqref{cli-diff} and $\< Z\cdot \phi,\phi\>=\<e_i\cdot \phi,\phi\>=0$.
Thus, taking the scalar product of Equation \eqref{contr} with $e_i\cdot \phi$ implies 
\begin{align*}
0= \sum_{j=1}^{m-1} \widetilde \Omega(\nu, e_j)\< e_j\ccdot_M \varphi, e_i\cdot  \phi\>. 
\end{align*}
By taking the imaginary part we obtain $\widetilde \Omega (\nu, e_i)=0$ and hence $\nu\lrcorner \widetilde{\Omega}=0$. Reinserting in Equation \eqref{contr} implies $dH=0$ which gives the contradiction in the second case.\medskip 

The third case covers the remaining possibility that $\< dH\cdot \varphi, \varphi\>$ is nonzero at a point in $U$. The next calculations will be carried out at this point. Taking the real part of the scalar product of Equation \eqref{eq:LichM} with $e_i\cdot \phi$ gives
\begin{align}\label{eq:iHnu0} \frac{m-1}{2}&\< dH\cdot \nu\cdot \phi, e_i\cdot \phi\> = \frac{\i}{2} {\Omega^M} (e_i, Z)\< Z\cdot \phi, \phi\>.
\end{align}
On the other hand taking the scalar product of the Ricci identity \eqref{Ricci-hyper}  for $X=e_i$ with $\nu\cdot\varphi$  gives 
 \begin{align*}
 - \frac{1}{2}& \Ric\left(e_i,Z\right) \<Z\cdot \varphi, \varphi\> + \frac{\i}{2} \Omega^M\left(e_i, Z\right) \< Z\cdot \varphi, \varphi\>
= -\frac 12 \< dH \cdot \nu \cdot \varphi,  e_i \cdot\varphi\>.
\end{align*}

The imaginary part of the last identity gives $\Ric\left(e_i,Z\right)=0$. Reinserting this into the above equation and using Equation \eqref{eq:iHnu0}  implies $\< dH \cdot \nu \cdot \varphi,  e_i \cdot\varphi\>=\Omega^M\left(e_i, Z\right)=0$.\medskip

Taking again the scalar product of the Ricci identity \eqref{Ricci-hyper} but this time for $X=Z$ then 
gives 
 \begin{align*}
  \frac{1}{2} &\Ric\left(Z ,Z\right)Z \cdot_M \varphi  = \frac{1-m}{2} |dH| \varphi + \left(\frac 12 (m-1)H^2 - 2(m-1) \alpha^2\right) Z\cdot_M\varphi.
\end{align*}
The real part of the scalar product with of the last identity with $\varphi$ gives that $\frac{1-m}{2} | dH| \vert \varphi\vert^2= 0$. But  $\varphi$ has no zeros on $U$ and $m>1$, so $dH=0$ which gives the desired contradiction for the remaining case.
\end{proof}

The following example shows that the dimension constraint  for Case 1 in Theorem~\ref{Exto} is necessary. 

\begin{example}There exist totally umbilical connected  hypersurfaces with non-constant mean curvature in Riemannian $\Spinc$ manifolds of dimension $3$ or $4$ and  carrying parallel or real Killing spinors:

\textit{\underline{Dimension $3$:}} The product of the canonical $\Spinc$ structure on $\mathbb S^2$ with the $\Spin$ structure on $\mathbb R$ defines a $\Spinc$ structure on the manifold $\widetilde M = \mathbb S^2 \times \mathbb R$ \cite {moroi}. This $\Spinc$ structure carries a parallel spinor \cite{moroi}. Totally umbilical  hypersurfaces (which are not totally geodesic) of $\mathbb S^2 \times \mathbb R$ have been classified in \cite{Souam1}. Moreover, they are not of constant mean curvature \cite[ Remark 10]{Souam1}.  We point out that  $\widetilde M = \mathbb S^2 \times \mathbb R$ is $\Spin$ but does not carry a real or parallel Killing $\Spin$ spinor.

\textit{\underline{Dimension $4$:}} The $\Spinc$ manifold $\widetilde M = \mathbb S^2 \times \mathbb H^2$  carries a parallel spinor for the product of the canonical $\Spinc$ structure on $\mathbb S^2$ with the canonical $\Spinc$ structure on $\mathbb H^2$. In \cite{danielthesis}, the author classified totally umbilical  hypersurfaces of $\mathbb S^2 \times \mathbb H^2$  (see \cite[Theorem 4.5.3]{danielthesis}) and showed that these hypersurfaces are not of constant mean curvature in general. We also point out that  $\widetilde M = \mathbb S^2 \times \mathbb H^2$ is $\Spin$ but does not carry a real or parallel Killing $\Spin$ spinor. 
\end{example}

\section{Extrinsic hyperspheres in Riemannian \texorpdfstring{$\Spin$}{Spin} manifolds}

In this section, we give some additional information if the ambient manifold carrying a Killing spinor is already spin. As a first corollary, we get:
\begin{cor}\label{cor-E}
Let $M^m \hookrightarrow \widetilde M^{m+1}$ be a totally umbilical   isometric immersion. Assume that $\widetilde M$ is a $\spin$ manifold with a Killing spinor $\psi$ of Killing constant $\alpha$ (could be zero, real or purely imaginary). Then, $M$ is Einstein with scalar curvature $\scal = m(m-1)(H^2 + 4 \alpha^2)$.  
\end{cor}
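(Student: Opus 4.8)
The plan is to use the $\Spinc$ Schrödinger--Lichnerowicz formula on $M$ together with the $\Spinc$ Ricci identity on $M$ from the earlier Lemma, specialized to the spin case where $\Omega^M = 0$. Since $\widetilde M$ is spin, the induced structure on $M$ is spin, so $\Omega^M$ vanishes identically. We already know from Theorem~\ref{Exto} (for $m+1 \geq 5$ in the real case, or $m+1\geq 3$ in the imaginary case) — or more directly from the fact that $\widetilde M$ is then Einstein and Theorem~\ref{bfact} — that $H$ is constant, so $dH \equiv 0$. (In fact one should simply invoke that $\widetilde M$ spin with a Killing spinor is Einstein, hence $H$ is constant by Kowalski's Theorem~\ref{bfact}, which removes any dimension restriction.)

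With $dH = 0$ and $\Omega^M = 0$, Equation~\eqref{Ricci-hyper} collapses to
\begin{align*}
\tfrac{1}{2}\,\mathrm{Ric}(X)\ccdot_M\varphi = \left(\tfrac{m-1}{2} H^2 + 2(m-1)\alpha^2\right) X\ccdot_M\varphi
\end{align*}
for all $X \in \Gamma(TM)$, i.e. $\mathrm{Ric}(X)\ccdot_M\varphi = (m-1)(H^2 + 4\alpha^2)\,X\ccdot_M\varphi$. The next step is to conclude from this that $\mathrm{Ric}(X) = (m-1)(H^2+4\alpha^2) X$ as vector fields. Writing $Y = \mathrm{Ric}(X) - (m-1)(H^2+4\alpha^2)X$, we have $Y\ccdot_M\varphi = 0$; since Clifford multiplication by a nonzero vector is invertible on the spinor bundle, $Y\ccdot_M\varphi = 0$ at a point where $\varphi \neq 0$ forces $Y = 0$ there. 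For a real Killing spinor $\varphi$ has no zeros (by \eqref{eq:realconst}), so $Y \equiv 0$ and $M$ is Einstein everywhere; for an imaginary Killing spinor the zero set of $\varphi$ is discrete, so $Y = 0$ on a dense open set, and by continuity of $\mathrm{Ric}$ it holds everywhere. In the parallel case $\alpha = 0$ the same argument applies. Taking the trace of $\mathrm{Ric} = (m-1)(H^2+4\alpha^2)\,\mathrm{Id}$ gives $\scal = m(m-1)(H^2+4\alpha^2)$.

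The only real subtlety is ensuring the Einstein conclusion is valid at points where the spinor might vanish (the imaginary case), which is handled by the discreteness of the zero set plus continuity; everything else is a direct substitution into \eqref{Ricci-hyper}. I expect no genuine obstacle here — the corollary is essentially a bookkeeping consequence of the Lemma in Section~3 once one notes $\Omega^M=0$ and that $H$ is locally constant.
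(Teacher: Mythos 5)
Your proposal is correct and follows essentially the same route as the paper: obtain that $H$ is constant (Theorem~\ref{Exto}, or equivalently Kowalski's Theorem~\ref{bfact} since a spin manifold with a Killing spinor is Einstein), then substitute $dH=0$ and $\Omega^M=0$ into the Ricci identity \eqref{Ricci-hyper} and use pointwise invertibility of Clifford multiplication to conclude $\mathrm{Ric}=(m-1)(H^2+4\alpha^2)\Id$ and hence $\scal=m(m-1)(H^2+4\alpha^2)$. The only difference is that you spell out the passage from $\mathrm{Ric}(X)\ccdot_M\varphi=\lambda X\ccdot_M\varphi$ to the Einstein condition (including possible spinor zeros), which the paper leaves implicit.
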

\begin{proof}[Proof of Corollary~\ref{cor-E}]
From Thm \ref{Exto}, $H$ is constant. By the Ricci identity \eqref{Ricci-hyper}, we have $$\frac 12 \mathrm{Ric}(e_j) \ccdot_M \phi = 2 (m-1) \left[\alpha^2 + \frac {H^2}{4}\right]e_j \ccdot_M \phi.$$  This means that $M$ is Einstein with constant scalar curvature $\scal = m(m-1)(H^2 + 4 \alpha^2)$.  
\end{proof}
For later use, we recall here \textbf{Koiso's Theorem.} 
\begin{thm}\cite[Thm. B]{koiso} \label{koi}Let $M$ be a totally umbilical  Einstein hypersurface in a complete Einstein manifold $(\widetilde M, \widetilde g)$. Then the only possible cases are: 
\begin{enumerate}
\item $g$ has  positive Ricci curvature. Then $g$ and $\widetilde g$ have constant sectional curvature. 
\item $\widetilde g$ has negative Ricci curvature. If $\widetilde M$ is compact or homogeneous, then $g$ and $\widetilde g$ have constant sectional curvature.
\item $g$ and $\widetilde g$ have zero Ricci curvature. If $\widetilde M$ is simply connected, then $\widetilde M = (\overline M, \overline g) \times \mathbb R$ where $\overline M$ is totally geodesic hypersurface in $\widetilde M$ which contains $M.$ 
\end{enumerate}
\end{thm}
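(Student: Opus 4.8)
The plan is to extract from the two hypotheses---total umbilicity and the ambient Einstein condition---enough pointwise curvature information to force $\widetilde M$ into a warped-product (hence highly symmetric) shape along $M$, and then to read off the trichotomy from the sign of the Einstein constant together with the completeness and the extra compactness/homogeneity/simple-connectivity assumptions.

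First I would collect the algebraic consequences along $M$. By Theorem~\ref{bfact} the mean curvature $H$ is constant, so the Codazzi equation for a totally umbilical hypersurface, $(\nabla_X\mathrm{II})(Y)-(\nabla_Y\mathrm{II})(X)=dH(X)Y-dH(Y)X$, forces the mixed curvatures to vanish: $\widetilde g(\widetilde R(X,Y)Z,\nu)=0$ for all $X,Y,Z$ tangent to $M$. Writing the ambient Einstein constant as $\widetilde\lambda$ and setting $\mu:=\widetilde\lambda/m$, the Ricci-contracted Gauss equation combined with $\mathrm{Ric}^M=\lambda g$ and $\widetilde{\mathrm{Ric}}=\widetilde\lambda\,\widetilde g$ then yields that the normal Jacobi operator is a pure trace:
\begin{equation*}
\widetilde R(\,\cdot\,,\nu)\nu=\mu\,\Id\quad\text{on }TM,\qquad \lambda=(m-1)\bigl(\mu+H^2\bigr).
\end{equation*}
Thus along $M$ both the shape operator $S=H\,\Id$ and the normal curvature operator $\mu\,\Id$ are scalar.

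Next I would flow $M$ by the normal exponential map; here completeness of $\widetilde M$ guarantees the geodesics $\exp_p(r\nu)$ exist for all $r$. Along each normal geodesic the shape operator $S_r$ of the equidistant hypersurface obeys a matrix Riccati equation $S_r'+S_r^2+R^\nu_r=0$ with $S_0=H\,\Id$. Decomposing $S_r$ and $R^\nu_r$ into trace and trace-free parts, the trace-free shape operator satisfies an ODE driven by the trace-free part of $R^\nu_r$; using the ambient Einstein equation together with the second Bianchi identity one controls the latter and shows that both trace-free parts stay identically zero. \emph{This propagation of total umbilicity is the main obstacle:} it upgrades the single umbilical leaf $M$ to a foliation of a neighborhood by umbilical equidistant hypersurfaces, and thereby identifies the metric near $M$ with a warped product $\widetilde g=dr^2+h(r)^2 g$ on $I\times M$, where the Einstein condition collapses to the single scalar ODE $h''=-\mu\,h$ with $h(0)=1$, $h'(0)=H$ (the tangential Einstein equations being then automatic and reproducing $\lambda=(m-1)(\mu+H^2)$). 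Equivalently one may invoke a warped-product splitting theorem of de Rham / Hiepko type once this spherical foliation has been produced.

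Finally I would run the case analysis on the sign of $\mu$ (equivalently of $\widetilde\lambda$ and of the Ricci curvatures). If $\mu>0$ the solution $h(r)=\cos(\sqrt\mu\,r)+\tfrac{H}{\sqrt\mu}\sin(\sqrt\mu\,r)$ has a first zero at which the warped product closes up smoothly; the closing-up condition forces the fibre to be a round sphere and $\widetilde g$ to have constant sectional curvature $\mu$, giving case~(1) (here $\lambda>0$). If $\mu=0$ then $h$ is affine: when $H\neq0$ the tube is a metric cone over $M$, and Ricci-flatness of a smooth complete cone forces flatness with $M$ round, again constant curvature; when $H=0$ one has $h\equiv1$, so $\widetilde g=dr^2+g$ splits as a Riemannian product, and under simple connectivity this globalizes to $\widetilde M=(\overline M,\overline g)\times\mathbb R$ with $M$ lying in a totally geodesic $\overline M$, which is case~(3). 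If $\mu<0$ the negatively curved warped product need not be rigid in general, but the extra hypothesis that $\widetilde M$ is compact or homogeneous excludes the non-rigid models, forcing constant sectional curvature and giving case~(2). Handling the focal points where $h\to0$, and invoking the compactness/homogeneity/simple-connectivity hypotheses to pass from the local warped-product picture to the stated global conclusions, are the remaining technical points.
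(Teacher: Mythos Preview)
The paper does not prove this theorem at all: it is quoted as Koiso's result \cite[Thm.~B]{koiso} and used as a black box in the proof of Theorem~\ref{pa-ap}. There is therefore no proof in the paper for your sketch to be compared against.

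On the sketch itself: the overall strategy (flow $M$ by the normal exponential, obtain a warped product, then read off the trichotomy from the ODE $h''=-\mu h$) is indeed the shape of Koiso's argument, but the step you flag as ``the main obstacle'' is genuinely the whole theorem and you have not actually argued it. The Einstein condition on $\widetilde M$ controls only the \emph{trace} of $R^\nu_r$ (namely $\operatorname{tr}R^\nu_r=\widetilde\lambda$); your claim that ``the ambient Einstein equation together with the second Bianchi identity'' forces the trace-free part of $R^\nu_r$ to vanish for all $r$, and hence that the equidistant hypersurfaces stay umbilical, is exactly what has to be proved and does not follow from the sentence you wrote. Without that, the Riccati equation for $S_r^0$ has an unknown forcing term and the warped-product form is not established. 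A secondary point: your case split is on the sign of $\mu=\widetilde\lambda/m$, whereas the statement's trichotomy is keyed on the signs of $\lambda$ and $\widetilde\lambda$ separately; for instance $\lambda>0$ (case~(1)) is compatible with $\mu<0$ when $H^2>-\mu$, and then $h$ is a combination of real exponentials that still has a zero, so the focal-point argument is available there too---but your write-up files this under ``$\mu<0$, need extra hypothesis,'' which does not match the statement.
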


An important special case of totally umbilical  hypersurfaces with constant mean curvature are totally geodesic hypersurfaces (when the mean curvature $H$ is  zero). The other cases are called extrinsic hypersphere (when the mean curvature is a nonzero constant).\medskip 

From Theorem~\ref{koi} and Corollary \ref{cor-E}, we deduce the following result:
\begin{thm}\label{pa-ap}
Let $\widetilde M$ be a complete Riemannian $\Spin$ manifolds of non-constant sectional curvature  that carry an $\alpha$-Killing spinor. 
If $\alpha \in \i\mathbb R\setminus\{0\}$, we assume moreover that $\widetilde M$ is homogeneous. Then, there are no extrinsic hyperspheres in $\widetilde M$.
 \end{thm}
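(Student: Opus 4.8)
The plan is to combine the constancy of the mean curvature (Theorem~\ref{Exto}) with Koiso's theorem (Theorem~\ref{koi}) via the Einstein structure on $M$ furnished by Corollary~\ref{cor-E}. Suppose, for contradiction, that $M^m \hookrightarrow \widetilde M^{m+1}$ is an extrinsic hypersphere, i.e.\ a connected totally umbilical hypersurface with constant mean curvature $H \neq 0$ (the constancy of $H$ itself is automatic: for $\alpha$ real/parallel one needs $m+1 \geq 5$, but a Killing spinor forces $\widetilde M$ to be Einstein, so in fact Theorem~\ref{bfact} applies in all dimensions $\geq 3$; for $\alpha \in \i\mathbb R\setminus\{0\}$ dimension $\geq 3$ suffices directly). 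By Corollary~\ref{cor-E}, $M$ is Einstein with scalar curvature $\scal = m(m-1)(H^2 + 4\alpha^2)$, and $\widetilde M$ is Einstein because it carries a Killing spinor. Hence the hypotheses of Koiso's theorem are met, and I would go through its three alternatives.

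First I would dispose of case (1): if $g$ has positive Ricci curvature, then both $g$ and $\widetilde g$ have constant sectional curvature, contradicting the hypothesis that $\widetilde M$ has non-constant sectional curvature. Next, case (2): if $\widetilde g$ has negative Ricci curvature and $\widetilde M$ is compact or homogeneous, the same contradiction with constant sectional curvature arises. This is where the homogeneity assumption in the imaginary case enters: when $\alpha \in \i\mathbb R\setminus\{0\}$, the Einstein constant of $\widetilde M$ is $m(m+1)\cdot 4\alpha^2 < 0$ (from the Schrödinger–Lichnerowicz computation $m(m+1)\alpha^2\psi = \tfrac{\widetilde{\scal}}{4}\psi$ with $\widetilde\Omega = 0$ in the spin case), so $\widetilde g$ has negative Ricci curvature; we then need compactness or homogeneity to kill this case, and a manifold with an imaginary Killing spinor is automatically noncompact, which is exactly why homogeneity must be assumed. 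For $\alpha$ real and nonzero, $\widetilde M$ has positive Ricci curvature, which lands us back in case (1) (and is in fact compact), so only case (3) can remain for $\alpha = 0$.

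Finally, case (3): $g$ and $\widetilde g$ are Ricci-flat, which forces $\alpha = 0$ (a parallel spinor), since $\widetilde{\scal} = 4m(m+1)\alpha^2 = 0$ already shows this, and moreover $\scal = m(m-1)H^2 = 0$ by Corollary~\ref{cor-E}. Since $m \geq 2$ and $H$ is constant, this gives $H = 0$, contradicting $H \neq 0$ for an extrinsic hypersphere. (One does not even need the splitting $\widetilde M = \overline M \times \mathbb R$; the scalar curvature identity alone suffices.) Thus all three cases of Koiso's theorem lead to a contradiction, so no extrinsic hypersphere can exist. The main obstacle — and the only subtle point — is matching the sign of the Ricci curvature to the nature of $\alpha$ so that the correct branch of Koiso's theorem applies, and in particular recognizing that the homogeneity hypothesis is forced precisely by the noncompactness that accompanies imaginary Killing spinors; everything else is a direct bookkeeping of the three alternatives against the standing hypothesis of non-constant sectional curvature.
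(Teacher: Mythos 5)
Your proof is correct and takes essentially the same route as the paper: Corollary~\ref{cor-E} makes $M$ Einstein with $\scal = m(m-1)(H^2+4\alpha^2)$, $\widetilde M$ is Einstein with $\widetilde{\scal}=4m(m+1)\alpha^2$, and the sign analysis is fed into Koiso's Theorem~\ref{koi}, with homogeneity needed exactly for the negative-Ricci branch that only occurs when $\alpha\in\i\mathbb R\setminus\{0\}$. The only cosmetic differences are that the paper organizes the argument by the sign of $H^2+4\alpha^2$ and so never has to touch Koiso's Ricci-flat alternative (which you dispose of correctly via $H=0$), and that the quantity $4m(m+1)\alpha^2$ you call the Einstein constant is the scalar curvature (the Einstein constant is $4m\alpha^2$), a slip that does not affect the sign argument.
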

\begin{proof} Assume that $M$ is an extrinsic hypersphere ($H\neq 0$) in a Riemannian $\Spin$ manifold with an $\alpha$-Killing spinor. By Corollary \ref{cor-E}, $M$ is  Einstein with scalar curvature $m(m-1)(H^2 + 4 \alpha^2)$.  If  $H^2 + 4 \alpha^2 > 0$, the Ricci curvature of $M$ is positive.  If  $H^2 + 4 \alpha^2\leq 0$, then $\alpha\in \i\mathbb R\setminus\{0\}$ and hence  the Ricci curvature of $\widetilde{M}$ is negative  and hence, in both cases, we have by Koiso's theorem \ref{koi} that $\widetilde g$ is of constant sectional curvature, which is a contradiction. 
\end{proof}
Theorem~\ref{THMp1} is a particular case of  Theorem~\ref{pa-ap} as is easily seen as follows: All the manifolds appearing in this Theorem are  $\Spin$, complete, with $\alpha$-Killing spinor and  of non-constant sectional curvature (see \cite{boyer} and \cite[Prop~3.1]{Gou}).\medskip 

One can add further examples for Theorem~\ref{pa-ap}, such as $6$-dimensional nearly K\"ahler manifolds which are not K\"ahler and of non-constant sectional curvature and  $7$-dimensional weak $G_2$ manifolds of non-constant sectional curvature.\medskip 

The completeness assumptions in Theorem~\ref{pa-ap} is necessary not only because we want to use Koiso's theorem but also because otherwise, every
manifold is an extrinsic hypersphere in its (non-complete) metric cone.


\begin{thebibliography}{99}
\bibitem{bar1} C. B\"ar, \textit{Real Killing spinors and holonomy}. Comm. Math. Phys. 154, 3 (1993), 509-521.

\bibitem{Bar98} C. B\"ar, \textit{Extrinsic bounds for eigenvalues of the Dirac operator}, Ann. Glob. Anal. Geom. 16 (1998), 573-596.

\bibitem{baum1} H. Baum, \textit{Complete Riemannian manifolds with imaginary Killing spinors},  Ann. Global Anal. Geom. 7, 3
(1989), 205-226.
\bibitem{baum2} H. Baum, \textit{Odd-dimensional Riemannian manifolds with imaginary Killing spinors}, Ann. Global Anal.
Geom. 7, 2 (1989), 141-153.
\bibitem{baum3} H. Baum, \textit{Vari\'et\'es riemanniennes admettant des spineurs de Killing imaginaires}, C. R. Acad. Sci. Paris S\'er.I Math. 309, 1 (1989), 47–49.

\bibitem{bfg}  H. Baum,  T. Friedrich, R. Grunewald and I. Kath, \textit{ Twistor and Killing spinors on Riemannian manifolds}, vol. 108 of Seminarberichte [Seminar Reports]. Humboldt Universit\"at Sektion Mathematik, Berlin, 1990.

\bibitem{BCO} J. Berndt, S. Console, C. Olmos, \textit{Submanifolds and Holonomy}, vol. 434. Chapman and  Hall, Boca Raton (2003).
\bibitem{chen1} B. Y. Chen, \textit{Extrinsic spheres in compact symmetric spaces are intrinsic spheres},
Michigan Math. J. 24 (1977), no. 3, 265-271.
\bibitem{chen2} B. Y. Chen, \textit{Odd-dimensional extrinsic spheres in K\"ahler manifolds}, 
Rend. Mat. (6) 12 (1979), no. 2, 201-207.
\bibitem{chen3} B. Y. Chen, \textit{Classification of totally umbilical  submanifolds in symmetric spaces}.
J. Austral. Math. Soc. (Series A) 30 (1980), 129-136.
\bibitem{chen4} B. Y. Chen, T. Nagano, \textit{Totally geodesic submanifolds of symmetric spaces,  II},
Duke Math. J. 45 (1978), no. 2, 405-425.
\bibitem{Chen1} B.~Chen, \textit{Totally umbilical  submanifolds of K\"ahler manifolds},  Arch. Math., Vol. 36, 83--91 (1981).

\bibitem{Gou} M. Bertola and D. Gouthier, \textit{Warped products with special Riemannian curvature}, Bol. Soc. Bras. Mat., Vol 32, No. 1, 2001, 45-62.

\bibitem{BHMM} J. P. Bourguignon, O. Hijazi, J. L. Milhorat and A. Moroianu, \textit{A spinorial approach to
Riemannian and conformal geometry}, EMS Monographs in Mathematics, 2015.
\bibitem{boyer} C. Boyer and K. Galicki,  \textit{3-Sasakian manifolds}, Surveys Diff. Geom. 7 (1999), 123.

\bibitem{frfr}T. Friedrich, \textit{Der erste Eigenwert des Dirac-operators einer kompakten Riemannschen
Mannigfaltigkeit nichtnegativer  Skalarkr{\"u}mmung}, Math. Nach. 97 (1980), 117-146.


\bibitem{friedrich} T. Friedrich, \textit{Dirac operators in Riemannian Geometry}, Graduate studies in mathematics, Volume 25, Americain Mathematical Society.

\bibitem{ginoux-these} N. Ginoux, \textit{Op\'erateurs de Dirac sur les sous-vari\'et\'es},  Ph. D thesis 2002, Institut \'Elie Cartan, Nancy.

\bibitem{grosse-nakad} N. Gro{\ss}e and R. Nakad, \textit{Complex Generalized Killing Spinors on Riemannian Spin$^c$ manifolds},  Results in Mathematics, Vol. 67, Issue 1 (2015), 177-195.

\bibitem{herzlich-moroianu} M. Herzlich and A. Moroianu, \textit{Generalized Killing spinors and conformal eigenvalue estimates for Spin$^c$ manifolds}, Ann. Glob. Anal. Geom. 17 (1999), 341-370.

\bibitem{HS14} O. Hijazi and S. Montiel, \textit{A holographic principle for the existence of parallel
spinors and an inequality of Shi-Tam type}, Asian J. Math. 18 (2014), no. 3, 489-506.





\bibitem{hij84} O. Hijazi, \textit{Op\'erateurs de Dirac sur les vari\'et\'es riemanniennes: minoration des
valeurs propres}, Ph. D thesis, Ecole Polytechnique, 1984. 


\bibitem{hij86} O. Hijazi, \textit{A conformal Lower bound for the smallest eigenvalue of the Dirac operator
and Killing spinors}, Commun. Math. Phys. 104 (1986), 151-162.

\bibitem{HMR03} O. Hijazi, S. Montiel and A. Roldan,\textit{
Dirac operators on hypersurfaces
of manifolds with negative scalar curvature}, Ann. Global Anal. Geom. 23 (2003), 247-264.


\bibitem{HMU} O. Hijazi, S. Montiel and F. Urbano,
\textit{ Spin$^c$ geometry of K\"ahler manifolds and the Hodge Laplacian on minimal Lagrangian submanifolds}, Math. Z. 253, Number 4 (2006) 821-853.

\bibitem{HMZ01} O. Hijazi, S. Montiel and X. Zhang, \textit{Dirac operator on embedded hypersurfaces}, Math. Res. Lett.
8 (2001), 195-208.



\bibitem{HM13} O. Hijazi and S. Montiel, \textit{A spinorial characterization of hyperspheres}, Calc. Var. Part. Diff. Eq.48 (2013), 527-544.

\bibitem{JMS} T. Jentsch, A. Moroianu and U. Semmelmann, \textit{Extrinsic hypersphere in manifolds with special holonomy}, Differential Geometry and its Applications, 31 (2013) 104-111. 
\bibitem{danielthesis} D. Kowalczyk,  \textit{Submanifolds  of product spaces}, Ph.D. thesis, Katolieke Universiteit LEUVEN, 2011. 
\bibitem{koiso} N. Koiso, \textit{Hypersurfaces of {E}instein manifolds}, 
   Ann. Sci. \'{E}cole Norm. Sup. 14(4) (1981), 433--443.
\bibitem{Kow} O. Kowalski, \textit{Properties of hypersurfaces which are characteristic for spaces of constant curvature}, Annali della Scuola Normale Superiore di Pisa, Classe di Scienze,  Serie 3, Volume 26 (1972) no. 1, p. 233-245.
\bibitem{KR} W. K\"{u}hnel and H.-B Rademacher,  \textit{Twistor Spinors with zeros}, Int. J. Math., 1994, 5,
877-895.



\bibitem{LM} H. B. Lawson and M. L. Michelson, \textit{Spin geometry}, Princeton University press, Princeton,
New Jersey, 1989.

\bibitem{Lich1} A. Lichnerowicz, \textit{Sur les z\'eros des spineurs-twisteurs}, C.R. Acad. Sci. Paris, 1990, 310, 19-22.
\bibitem{moroi} A. Moroianu, \textit{Parallel and Killing spinors on Spin$^c$ manifolds}, Comm. Math. Phys. 187, 2 (1997), 417-427.
\bibitem{r2} R. Nakad, \textit{The Energy-Momentum tensor on Spin$^c$ 
manifolds}, Advances in Mathematical Physics, vol. 2011, Article ID 471810, doi:10.1155/2011/471810.

\bibitem{Na11} R. Nakad, \textit{Sous-vari\'et\'es sp\'eciales des vari\'et\'es spinorielles complexes}, Ph.D. thesis, Universit\'e Henri Poincar\'e-Nancy I, (2011).


\bibitem{ok} M. Okumuara, \textit{Totally umbilical  hypersurfaces of a locally product Riemannian manifolds}, Kodai Math. Sem. Rep., 19 (1967), 35-42.
\bibitem{ON} B. O'Neill, \textit{Semi-Riemannian geometry}, Academic Press, New York, London, 1983. 

\bibitem{rad} H.B. Rademacher, \textit{Generalized Killing spinors with imaginary Killing
function and conformal Killing
fields}, In Global differential geometry and global analysis (Berlin,1990),  vol. 1481 of Lecture Notes in Math. Springer, Berlin, 1991, pp. 192-198
\bibitem{RN12} J. Roth and R. Nakad, \textit{Hypersurfaces of Spin$^c$ manifolds and Lawson type correspondence},  Ann. Glob. Anal. Geom., Vol. 42, No. 3 (2012), 421-442.

\bibitem{Souam1} R. Souam and E. Toubiana, \textit{Totally umbilic  surfaces in homogeneous 3-manifolds}, Comment. Math. Helv. 84 (2009), 673-704

\bibitem{to} K. Tojo, Extrinsic hyperspheres of naturally reductive homogeneous spaces. Tokyo J. Math. 20.
(1997), no. 1, 35-43.
\bibitem{tsu} K. Tsukada, \textit{Totally geodesic hypersurfaces of naturally reductive homogeneous spaces}, Osaka J. Math.
33 (1996), no. 3, 697-707.
\bibitem{yam} S. Yamaguchi, H. Nemoto and N. Kawabata, \textit{Extrinsic spheres in a K\"ahler manifold}.
Michigan Math. J. 31 (1984), no. 1, 15-!9.


\bibitem{wang} M. Wang, \textit{Parallel spinors and parallel forms}, Ann. Glob. Anal. Geom. 7, 59-68 (1989).








\end{thebibliography}
\end{document}